\newtheorem{theorem}{Theorem}[]
\newtheorem*{theorem*}{Theorem}
\newtheorem{lemma}[theorem]{Lemma}
\newtheorem{proposition}[theorem]{Proposition}
\newtheorem*{claim*}{Claim}
\theoremstyle{definition}
\newtheorem{definition}[theorem]{Definition}
\newtheorem*{definition*}{Definition}
\theoremstyle{AppDefinition}
\theoremstyle{AppClaim}
\theoremstyle{remark}
\newtheorem{example}[theorem]{Example}
\newtheorem*{example*}{Example}
\def\beginmat{ \left( \begin{array} }
\def\endmat{ \end{array} \right) }
\def\log{{\rm log}}
\newcommand{\dtr}{{d_{\mathrm{tr}}}}
\newcommand*{\op}{%
  \DOTSB
  \mathop{\vphantom{\bigoplus}\mathpalette\matt@op\relax}%
  \slimits@
}
\newcommand\matt@op[2]{%
  \vcenter{\m@th\hbox{\resizebox{\widthof{$#1\bigoplus$}}{!}{$\boxplus$}}}%
}
\newcommand{\one}{{\bf{1}}}
\def\R{{\mathbb R}}
\def\@biblabel#1{}
\@citea\NAT@hyper@{%
     \NAT@nmfmt{\NAT@nm}%
     \hyper@natlinkbreak{\NAT@aysep\NAT@spacechar}{\@citeb\@extra@b@citeb}%
     \NAT@date}}
\@citea\NAT@nmfmt{\NAT@nm}%
\NAT@spacechar\NAT@hyper@{\NAT@date}}{}{}
\@citea\NAT@hyper@{%
     \NAT@nmfmt{\NAT@nm}%
     \hyper@natlinkbreak{\NAT@spacechar\NAT@@open\if*#1*\else#1\NAT@spacechar\fi}%
       {\@citeb\@extra@b@citeb}%
     \NAT@date}}
\@citea\NAT@nmfmt{\NAT@nm}%
\fi\NAT@hyper@{\NAT@date}}
\begin{document}
\def\spacingset#1{\renewcommand{\baselinestretch}%
{#1}\small\normalsize} \spacingset{1}
\begin{flushleft}
{\Large{\textbf{Tropical Geometry of Phylogenetic Tree Space:\\ A Statistical Perspective}}}
\newline
\\
Anthea Monod$^{1,\dagger}$, Bo Lin$^{2}$, Ruriko Yoshida$^{3}$, and Qiwen Kang$^{4}$
\\
\bigskip
\bf{1} Department of Mathematics, Imperial College London, UK 
\\
\bf{2} School of Mathematics, Georgia Institute of Technology, Atlanta, GA, USA
\\
\bf{3} Department of Operations Research, Naval Postgraduate School, Monterey, CA, USA
\\
\bf{4} Medpace, Inc.~and Department of Statistics, University of Kentucky, Lexington, KY, USA
\\
\bigskip
$\dagger$ Corresponding e-mail: a.monod@imperial.ac.uk\\
\end{flushleft}


\section*{Abstract}

Phylogenetic trees are the fundamental mathematical representation of evolutionary processes in biology.  They are also objects of interest in pure mathematics, such as algebraic geometry and combinatorics, due to their discrete geometry.  Although they are important data structures, they face the significant challenge that sets of trees form a non-Euclidean phylogenetic tree space, which means that standard computational and statistical methods cannot be directly applied.  In this work, we explore the statistical feasibility of a pure mathematical representation of the set of all phylogenetic trees based on tropical geometry for both descriptive and inferential statistics, and unsupervised and supervised machine learning.  Our exploration is both theoretical and practical.  We show that the tropical geometric phylogenetic tree space endowed with a generalized Hilbert projective metric exhibits analytic, geometric, and topological properties that are desirable for theoretical studies in probability and statistics and allow for well-defined questions to be posed.  We illustrate the statistical feasibility of the tropical geometric perspective for phylogenetic trees with an example of both a descriptive and inferential statistical task.  Moreover, this approach exhibits increased computational efficiency and statistical performance over the current state-of-the-art, which we illustrate with a real data example on seasonal influenza.  Our results demonstrate the viability of the tropical geometric setting for parametric statistical and probabilistic studies of sets of phylogenetic trees.

\paragraph{Keywords:} BHV tree space; phylogenetic tree space; tree metric; tropical geometry; tropical metric.


\section{Introduction}
\label{sec:intro}

Evolutionary relationships describing how organisms are related by a common ancestor are represented in a branching diagram known as a {\em phylogenetic tree}.  Phylogenetic trees model many important and diverse biological phenomena, such as speciation, the spread of pathogens, and cancer evolution.  Methodology to analyze phylogenetic datasets has been under active research for several decades for two important reasons.  First, explicit computations directly on collections of phylogenetic trees are challenging due to high dimensionality in terms of a large number of leaves, a long evolutionary history, and an intricate branching pattern.  Second, standard statistical methodologies are not directly applicable due to the non-Euclidean nature of the trees themselves as well as the set that they make up.  Significant previous work addresses various classical statistical interests, however a fundamental breakthrough for quantitative studies on sets of trees emerged through studying the geometry of the set of all phylogenetic trees \citep{BILLERA2001733}.  

Referred to in the literature as {\em BHV tree space}---after the authors Billera, Holmes, and Vogtmann---the set of all phylogenetic trees is studied in a setting where each tree is represented as an individual point.  The geometry is characterized by a unique geodesic between any two points; its length defines a metric on the space.  Since its introduction in 2001, it has been actively studied in various wide-reaching domains, including algebraic geometry \citep[e.g.,][]{DEVADOSS201575}, category theory \citep[e.g.,][]{baez2017operads}, computational biology \citep[e.g.,][]{Weyenberg2016}, and statistical genetics \citep[e.g.,][]{doi:10.1093/biomet/asx047}.  Despite its indisputable significance, the BHV geometry nevertheless poses significant data-analytic complications for both descriptive and inferential statistics and unsupervised and supervised machine learning.  Although the non-Euclidean aspect of tree space cannot be avoided, considering an alternative approach from pure mathematics based on {\em tropical geometry}---a variant of algebraic geometry---alleviates some of these complications and is a promising alternative approach for probability-based statistics on sets of phylogenetic trees.  The first formal connection between tropical geometry and mathematical phylogenetics arises in the space of phylogenetic trees in relation to a particular tropical algebraic variety \citep{Speyer2004}.  This coincidence has been further studied in theoretical research \citep[e.g.,][]{ARDILA200638, manon2011dissimilarity}, however, its implication and potential in applied work remain largely understudied and untapped.

In this paper, we explore the tropical geometric perspective of phylogenetic tree space with the aim of enabling descriptive and inferential statistics as well as unsupervised and supervised machine learning.  Specifically, we study the subspace of the {\em tropical projective torus} corresponding to the space of phylogenetic trees equipped with a generalized projective Hilbert metric, which we refer to as the {\em tropical metric}.  We refer to this metric space as {\em palm tree space} (tropical tree space) and show that it satisfies fundamental assumptions to ensure that probabilistic and parametric statistical questions are valid and well-defined, which establishes the grounds for future development of parametric studies in the tropical geometric setting for phylogenetic trees.  We give a concrete example of both a descriptive and inferential task in statistics: principal component analysis (PCA) and linear discriminant analysis (LDA), respectively.  We study these tasks in the context of both simulated and real-world seasonal influenza data.  Our real data application demonstrates that the tropical geometric approach exhibits improvements in computational efficiency and improved statistical performance over the BHV setting.


The remainder of this paper is organized as follows.  In Section \ref{sec:math}, we provide background and motivation for our study.  In Section \ref{sec:palm}, we discuss properties of the tropical metric on the space of phylogenetic trees and formally define palm tree space.  We study its geometry, topology, and analytic properties in relation to BHV space.  We also give some examples of theoretical probability measures used in statistics and that are important in probability theory.  In Section \ref{sec:app}, we give examples of statistical techniques on palm tree space, including numerical experiments and an application to real data in both palm tree and BHV space.  We close in Section \ref{sec:end} with a discussion, and some directions for future research.


\section{Background and Motivation}
\label{sec:math}

Phylogenetic trees are symbolic objects that model evolutionary divergence from a common ancestor.  In computational biology, the reconstruction of a phylogenetic tree from an input of sequence alignment data (e.g., DNA and RNA) is a challenging problem; reconstruction methods are known to be highly sensitive to the input sequences (different genes or coding regions will give rise to different trees), measurement errors (alignment or sequencing errors), and noise typical to this type of biological data \citep[e.g.,][]{10.1093/sysbio/syq073}.  This sensitivity naturally invites the question of how to compare trees, for example, arising from different reconstruction methods.  Mathematically, comparing objects entails measuring the distance between them; in the context of phylogenetic trees, this gives rise to a {\em tree space} equipped with a {\em metric between trees}.  One of the most significant challenges in computational work with phylogenetic trees as data objects is that their graphical structure gives rise to a non-Euclidean tree space.  

In this section, we provide the mathematical background to phylogenetic trees from the pure mathematical perspective and the statistical motivation for studying this perspective.

\subsection{Defining Phylogenetic Trees}

In what follows, we consider $N \in \mathbb{N}$.  A tree is an acyclic connected graph $T = (V,E)$, defined by a set of vertices $V$ and a set of edges $E$.  An {\em $N$-tree} is a tree with $N$ labeled terminal nodes called {\em leaves}.  Edges connecting to leaves are called {\em external edges}, other edges are called {\em internal edges}.  A {\em binary $N$-tree} is an $N$-tree with the following conditions on the degree of a vertex $v \in V$: if $\operatorname{deg}(v) = 2$, then $v$ is the {\em root} of the tree and it is unique; if $\operatorname{deg}(v) = 1$, then $v$ is a leaf; and if $\operatorname{deg}(v) = 3$, then $v$ is an internal vertex.  A {\em tree topology} is the ``shape" of the tree; it is a branching configuration of edges together with a leaf labelling scheme.  There are $(2N-3)!!$ binary tree topologies on $N$ leaves \citep{schroder1870vier}.  A {\em metric $N$-tree} is a tree with zero or positive lengths on all edges; metric $N$-trees are also known as {\em phylogenetic trees}.  We denote the space of phylogenetic trees with $N$ leaves by $\mathcal{T}_N$.

A phylogenetic tree may be represented by all pairwise distances between leaves.  Let $w_{ij}$ denote the distance between leaves $i$ and $j$, given by the sums of edge lengths along the unique path between $i$ and $j$.  The $N \times N$ matrix $W$ with entries $w_{ij}$ then represents a phylogenetic tree.  Since $W$ is symmetric with zeros along the diagonal, the upper-triangular portion of the matrix contains all of the unique information needed to specify a phylogenetic tree in terms of its pairwise distances.  Thus $W$ represents a phylogenetic tree as, in essence, a distance matrix.  
However, in order for the distance matrix $W$ to represent a phylogenetic tree, the following additional condition must be satisfied.

\begin{proposition}{(Four-Point Condition \cite[Theorem 1]{BUNEMAN197448}, \cite[Lemma 4.3.6]{maclagan2015introduction}).}
\label{prop:4pt}
A distance matrix $W$ represents a phylogenetic tree if it satisfies the conditions to be a metric and the maximum among the following {\em Pl\"{u}cker relations} is attained at least twice for $1 \leq i < j < k < \ell \leq N$:
\begin{equation}
\label{eq:plucker}
w_{ij} + w_{k\ell}, \qquad w_{ik} + w_{j\ell}, \qquad w_{i\ell} + w_{jk},
\end{equation}
or equivalently, that 
\begin{equation}
\label{eq:4pt}
w_{ij} + w_{k\ell} \leq \max(w_{ik} + w_{j\ell},\, w_{i\ell} + w_{jk})
\end{equation}
for all $i,j,k,\ell \in \{1, 2, \ldots, N\}$.
\end{proposition}

A distance matrix $W$ satisfying the conditions of Proposition \ref{prop:4pt} is known as a {\em tree metric}.  Note that tree metrics represent phylogenetic trees; these differ from metrics between trees.  For $W$ representing a phylogenetic tree (that is, satisfying the conditions in Proposition \ref{prop:4pt}), we may also represent the upper triangular portion of the matrix in vector form by setting $n := \binom{N}{2}$ and defining the following map:
\begin{equation}
\label{eq:W}
\begin{aligned}
\mathcal{W}: \mathcal{T}_N & \rightarrow \mathbb{R}^n,\\
W &\mapsto w = (w_{12},\, w_{13},\, \ldots,\, w_{1N},\, w_{23},\, \ldots,\, w_{2N},\, \ldots,\, w_{(N-1)N}),
\end{aligned}
\end{equation}
where the entries are ordered lexicographically.

\begin{example}\label{ex:4pt}
The tree metric $w \in \mathbb{R}^6$ for the tree in Figure \ref{fig:4ptex} expressed as a vector is $(w_{PQ},\, w_{PR},\, w_{PS},\, w_{QR},\, w_{QS},\, w_{RS})$.  As a matrix $W$, it is
$$
\begin{pmatrix} 0 & w_{PQ} & w_{PR} & w_{PS}\\ & 0 & w_{QR} & w_{QS} \\  & & 0 & w_{RS}\\ & & & 0 \end{pmatrix} 
= \begin{pmatrix} 0 & a + b & a + c + d & a + c + e\\ & 0 & b + c + d & b + c + e \\  & & 0 & d + e\\ & & & 0 \end{pmatrix}.
$$

The Pl\"{u}cker relations (\ref{eq:plucker}) associated with $W$ are
\begin{equation*}
\begin{split}
A:= w_{PQ} + w_{RS} & = a + b + d + e,\\
B:= w_{QR} + w_{PS} & = a + b + 2c + d + e,\\
C:= w_{PR} + w_{QS} & = a + b + 2c + d + e.
\end{split}
\end{equation*}
Since $P < Q < R < S$, the maximum $B = C$ is achieved exactly twice, and $B-A = 2c > 0$.  Also, (\ref{eq:4pt}) holds: $A \leq \max\{ B, C \} = B.$

\begin{figure}[ht]
\centering
\begin{tikzpicture}[scale=0.25]
\draw (0,0) -- (4,3) -- (0,4.5);
\filldraw [black] (0,0) circle (1pt);
\filldraw [black] (0,4.5) circle (1pt);
\node [below] at (0,0) {$P$};
\node [below] at (2,1.4) {$a$};
\node [above] at (0,4.5) {$Q$};
\node [above] at (2,3.9) {$b$};
				\draw (4,3) -- (9,3);
				\node [below] at (6.5,3) {$c$};
				\draw  (11,0) -- (9,3) -- (13,5);
				\filldraw [black] (11,0) circle (1pt);
				\filldraw [black] (13,5) circle (1pt);
				\node [above] at (13,5) {$R$};
				\node [above] at (10.9,4) {$d$};
				\node [below] at (11,0) {$S$}; 
				\node [below] at (9.9,1.4) {$e$};
			\end{tikzpicture}
			\caption{Example of an unrooted phylogenetic tree to illustrate the four-point condition.}
			\label{fig:4ptex}
	\end{figure}
\end{example}

The four-point condition for trees may be further tightened to give rise to an important subclass of trees as follows.

\begin{proposition}[Three-Point Condition \citep{JARDINE1967173,maclagan2015introduction}]
\label{prop:3pt}
A distance matrix $W$ represents an {\em ultrametric tree} if it satisfies the conditions to be a tree metric and the maximum among $w(i, j),\,  w(i, k),\, w(j,k)$
is achieved at least twice for $1 \leq i < j < k \leq N$, or equivalently, that
$
w(i,k) \leq \max(w(i,j),\, w(j,k))
$ 
for all $i,j,k\in [N]$.
\end{proposition}

Denote the space of all phylogenetic trees satisfying the three-point condition with $N$ leaves by $\mathcal{U}_N$.  Ultrametric trees are also {\em equidistant trees}; i.e., rooted trees where the distance from the root to every leaf is equal.

\begin{proposition}\label{prop:ultra}
A phylogenetic tree $T$ is an ultrametric tree if and only if $T$ is equidistant.
\end{proposition}

\begin{proof}
For any two points $X,Y$ on a tree, we denote by $w(X,Y)$ the length of the unique path connecting $X$ and $Y$.  Suppose a tree $T$ is equidistant with root $R$.  Then for any three leaves $A,B,C$, we have that $w(R,A)=w(R,B)=w(R,C)$.  Since $R,A,B,C$ satisfies the four-point condition, the maximum among (\ref{eq:plucker}), $w(R,A)+w(B,C), w(R,B)+w(A,C),$ and $w(R,C)+w(A,B)$ is attained at least twice.  Thus, the maximum among $w(B,C), w(A,C), w(A,B)$ is also attained at least twice, satisfying the three-point condition, and $T$ is therefore an ultrametric.



	
Conversely, suppose $T$ is an ultrametric.  Then there are finitely many leaves in $T$, so we can choose a pair of leaves $A,B$ such that $w(A,B)$ is maximal among all such pairs.  Along the unique path from $A$ to $B$, there is a unique point $R$ such that $w(R,A)=w(R,B)$.  For any other leaf $C$, consider the distance $w(R,C)$: Since the paths from $R$ to $A$ and $B$ only intersect at $R$, the path from $R$ to $C$ intersects at least one of them only at $R$.  Suppose without loss of generality that the path from $R$ to $A$ is such a path.  Then $w(A,C)=w(R,A)+w(R,C)$.  Since $w(A,C)\le w(A,B)$, we have $w(R,C)\le w(R,B)$.  If $w(R,C)<w(R,B)$, then $w(A,C)<w(R,A)+w(R,B)=2w(R,B)$, and $w(B,C)\le w(R,B)+w(R,C)<2w(R,B)$, so the maximum among $w(A,B),\, w(A,C),\, w(B,C)$ is $w(A,C)=2w(R,B)$ and it is only attained once --- a contradiction, since $T$ was assumed an ultrametric.  Hence $w(R,C)=w(R,B)$, and $R$ has equal distance to all leaves of $T$.  Therefore $T$ is equidistant with root $R$.
\end{proof}

\subsection{Metrics on Tree Spaces: BHV Space}

Various metrics between trees have been derived in biology.  A notable class of metrics strives to retain the inner product property akin to Euclidean distance, which makes them popular due to their integrability into a wide range of statistical approaches, such as functional and nonparametric modeling.  One metric from this class extensively used in biology is the {\em Robinson--Foulds metric} \citep{ROBINSON1981131}.  This metric (and other inner-product distances between trees) is known to suffer from structural and interpretive errors.  For example, many pairs of trees measure the same distance apart; also, large distances between trees, counterintuitively, do not necessarily indicate a disparity in ancestral heritage \citep{Steel1993}.  Other commonly-occurring distances include the nearest neighbor interchange metric \citep{waterman1976some}, subtree transfer distance \citep{Allen2001}, and variational distance \citep{steel2006variational}.  For a detailed review of metrics between trees, see \cite{Weyenberg2016, 10.1093/sysbio/syw025}.  A pioneering approach that bypasses difficulties and limitations of these metrics focuses on the geometry of tree space \citep{BILLERA2001733}.  

\begin{figure}[ht]
\centering
\includegraphics[scale=0.4]{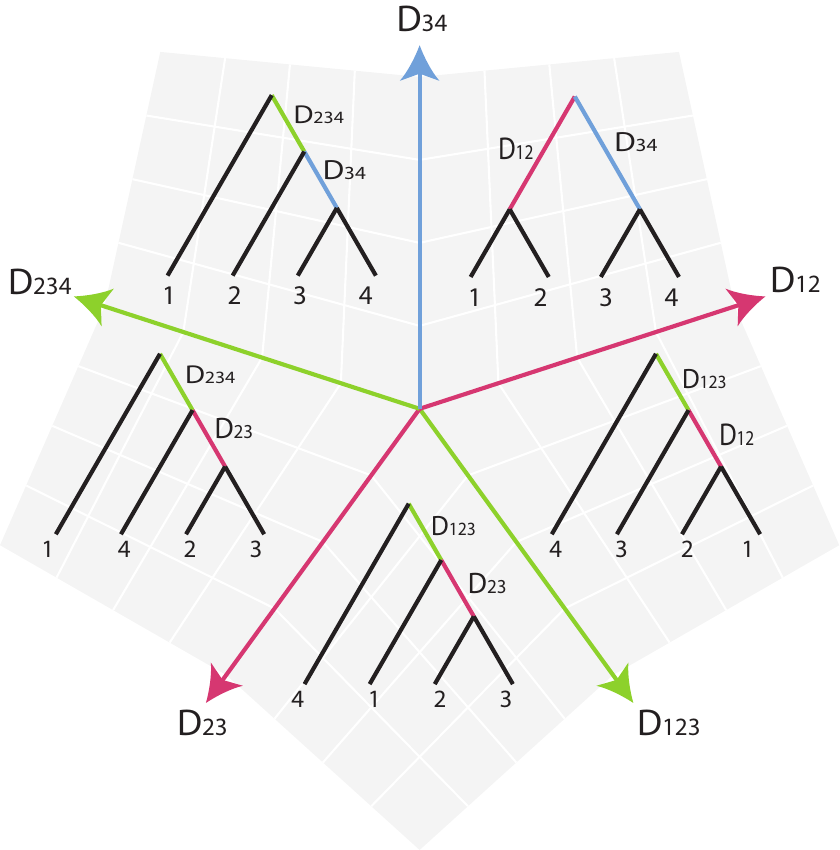}
\caption{Some orthants in $\mathcal{T}_4^{\operatorname{BHV}}$.  The axes $D_{ijk}$ represent the coordinate corresponding to the length of the internal edge leading to the $\{ijk\}$ clade.}
			\label{fig:BHV4}
\end{figure}

Specifically, the space of phylogenetic trees is modeled as a {\em moduli space}, where each point in the space represents a phylogenetic tree.  Trees are expressed only by the lengths of their internal edges, which are recorded as entries in a vector of dimension $N-2$ since in a binary tree with $N$ leaves, there are at most $N-2$ internal edges; see Figure \ref{fig:BHV4} for an illustration.  External edges are not considered, since taking them into account does not affect the geometry of the space: including external edges simply amounts to taking the product of tree space with an $N$-dimensional Euclidean space.  A nonnegative Euclidean orthant $\mathbb{R}^{N-2}_{\geq 0}$ is associated to each tree topology.  BHV space may also be interpreted combinatorially: For each orthant, the {\em link of the origin}
\begin{equation}
\label{eq:link}
\mathcal{L}_N := \big\{ (x_1,\, \ldots,\, x_{N-2}) \mid \sum_{i} x_i = 1\big\}
\end{equation}
gives rise to a simplicial complex of dimension $N-3$ \citep{BILLERA2001733}.  BHV space is an infinite cone over $\mathcal{L}_N$.  

The $(2N-3)!!$ orthants are grafted at right-angles to make up the tree space, which gives rise to a property of nonpositive curvature known as $\operatorname{CAT}(0)$.  In $\operatorname{CAT}(0)$ spaces, there is a unique shortest path between any two points; here, this is the {\em BHV geodesic}.  To compute BHV geodesics, first, the geodesic distance between two trees (represented by only internal edge lengths as in the original reference \citep{BILLERA2001733}) is computed, and then the external branch lengths are factored in to compute the overall geodesic distance between two trees in full generality (where external branch lengths are included, as opposed to the original reference \citep{BILLERA2001733}), by taking the differences between external branch lengths.  Since each orthant is locally viewed as a Euclidean space, the shortest path between two points within a single orthant is a straight Euclidean line.  The difficulty appears in establishing which sequence of orthants joining the two topologies contains the geodesic.  In the case of four leaves, this can be readily determined using a systematic grid search, but such a search is intractable with larger trees.  Currently, the fastest available algorithm to compute geodesic paths between any two points in this tree space has a time complexity of $O(N^4)$ \citep{Owen:2011:FAC:1916480.1916603}.  The length structure of the BHV geodesic induces the {\em BHV metric} $d_{\mathrm{BHV}}$ on this space.  This setup has come to be known as {\em BHV space} $\mathcal{T}_N^{\operatorname{BHV}}$ and is ubiquitous even in non-biological fields, including computer vision, combinatorics, and category theory.  It has also been proposed as the definitive setting for computational studies on sets of phylogenetic trees \citep{GAVRYUSHKIN2016197}.

It turns out that BHV space poses considerable limitations for classical descriptive and inferential statistics.  On the descriptive front, the convex hull of finitely many points in tree space with edges given by BHV geodesics is unbounded in dimension \citep{doi:10.1137/16M1079841}, so there exists no obvious subspace for projections and no lower dimensional representations of data.  This is restrictive for classical dimensionality reduction and data visualization methods, such as principal component analysis (PCA).  An important challenge in inferential statistics in BHV space concerns the Fr\'{e}chet mean, which is a fundamental quantity in statistics: the Fr\'{e}chet mean is the generalization of the classical arithmetic mean to arbitrary metric spaces; see also Section \ref{subsec:means} further on.  In BHV space, Fr\'{e}chet means are {\em sticky}: the mean fails to be injective and ``sticks'' to lower dimensional strata \citep{10.2307/42919709}; see Example \ref{ex:sticky}.  Thus, perturbing points in a sample results in no change in the mean, meaning that exact parametric asymptotic results cannot be derived, which prohibits classical exact statistical inference.  

\begin{example}
\label{ex:sticky}
In Figure \ref{fig:sticky}, we position three unit masses on the $3$-spider as shown below, which is the stratified space of three $\mathbb{R}_{\geq 0}$ rays joined at the origin.  Two of the masses are at distance 1 from the origin, while the remaining mass is at distance $a$ from the origin.  This is precisely the BHV space of phylogenetic trees with three leaves and fixed external edge lengths.  The position $x$ of the barycenter (Fr\'echet mean) is calculated by minimizing $2(1+x)^2 + (a-x)^2$.  The solution is $x = 0$ for $a < 2$, and $x = (a-2)/3$ for $a \geq 2$.  The Fr\'{e}chet mean tends to stick to lower-dimensional strata. 
	\begin{figure}[h]
			\centering
			\begin{tikzpicture}[scale=0.4]
				\draw (0,0) -- (3,2) -- (3,6);
				\draw [<->] (1, 0.85) -- (2.9, 2.1);
				\draw [<->] (3.15, 2.1) -- (3.15, 4.95);
				\draw (3,2) -- (6,0);
				\draw [<->] (3.1, 2.1) -- (5, 0.85);
				\draw [fill] (1,0.67) circle [radius=0.05];
				\node [above] at (1.75,1.45) {$1$};
				\draw [fill] (5,0.67) circle [radius=0.05];
				\node [above] at (4.4,1.4) {$1$};
				\draw [fill] (3,5) circle [radius=0.05];
				\node [right] at (3.1,4) {$a$}; 
				\draw [fill] (3,3.5) circle [radius=0.05];
				\node [left] at (3,3.5) {$x$};
			\end{tikzpicture}
			\caption{$3$-spider to illustrate stickiness.}
			\label{fig:sticky}
	\end{figure}
\end{example}

Sophisticated methods have been developed to bypass these difficulties: a locus of BHV Fr\'{e}chet means has well-behaved dimensionality and serves as a suitable lower dimensional projective space \citep{doi:10.1093/biomet/asx047}, and a central limit theorem for BHV Fr\'{e}chet means exists via a generalized delta method \citep{Barden2018}.  Inferential techniques have also been proposed based on this generalized delta method strategy \citep[e.g.,][]{willis2019confidence, willis2018uncertainty}.  These, and other proposed methods, are largely approximate, rather than exact statistical methods; additionally, they tend to be nonparametric, rather than parametric.  These statistical challenges have spurred recent proposals of alternative tree spaces \citep{garba2021information}.

\subsection{Tropical Geometry and Phylogenetic Tree Space}

In this work, we focus on the appearance of phylogenetic tree space in tropical geometry in groundbreaking work that formally connects the space of phylogenetic trees and the {\em tropical Grassmannian} \citep{Speyer2004}.  We now outline the connection between tropical geometry and phylogenetic tree space.  To do this, we return to the map $\mathcal{W}$ (\ref{eq:W}).  Specifically, we would like to understand what the image of $\mathcal{W}$ is: if it is a linear space, then theory from linear algebra is applicable; if it is a manifold, then principles of Riemannian geometry may be applied.  It turns out that the image of $\mathcal{W}$ is tropical geometric, so new tools for statistics are needed.

To see this, notice that the embedding (\ref{eq:W}) of trees into Euclidean space may be refined: if we do not wish to distinguish between phylogenetic trees differing by a constant on each external edge, we may consider the quotient space $\mathbb{R}^n/\mathbb{R}\one$, where $\one$ is the all-one vector $(1,1,\ldots,1)$, which gives a reduction in dimension.  The quotient space $\mathbb{R}^n/\mathbb{R}\one$ is known as the {\em tropical projective torus} and it is generated by an equivalence relation $\sim$ specifying that for two points $x,y \in \mathbb{R}^n$, $x \sim y$ if and only if all coordinates of their difference $x-y$ are equal.  In the context of trees, the quotient normalizes evolutionary time between trees.  The tropical projective torus is the ambient space of the space of phylogenetic trees; $\mathcal{T}_N$ is a proper subset of $\mathbb{R}^n/\mathbb{R}\one$.  The tropical projective torus $\mathbb{R}^n/\mathbb{R}\one$ may also be generated by a group action: Let $G := \{ (c, c, \ldots, c) \in \mathbb{R}^n \mid c \in \mathbb{R} \}$ with coordinate-wise addition, then $G$ is an additive group.  $G$ acts on $\mathbb{R}^n$ as follows: for $g \in G$ and $x \in \mathbb{R}$,
$$
g \circ x = (x_1 + g_1,\, x_2 + g_2,\, \ldots,\, x_n + g_n).
$$
Each point in $\mathbb{R}^n/\mathbb{R}\one$ is then exactly one orbit under the group action of $G$ on $\mathbb{R}^n$.

Furthermore, if we disregard differences on external edges, we may consider the quotient space $\mathbb{R}^n/\operatorname{im}(\varphi)$ where the map $\varphi:\mathbb{R}^N \rightarrow \mathbb{R}^n$ is given by $\varphi(x_1,\ldots,x_N) = (x_1 + x_2,\, x_1 + x_3,\, \ldots,\, x_{N-1} + x_N)$.  This map has the geometric intuition that two trees are identified if the lengths of the $N$ edges adjacent to the leaves are the only difference between them, which gives intuition on why this map is well-defined; for further technical details on why this map is well-defined, please see page 396 of \cite{Speyer2004}.  We thus obtain the following sequence of maps:
\begin{equation}
\label{eq:maps}
\mathcal{T}_{N-1} \rightarrow \mathbb{R}^n \rightarrow \mathbb{R}^n/\mathbb{R}\one \rightarrow \mathbb{R}^n/\operatorname{im}(\varphi).
\end{equation}

In algebraic geometry, the solution sets of systems of polynomial equations---referred to as {\em algebraic varieties}---are studied.  In {\em tropical geometry}, these polynomial equations are defined in the {\em tropical semiring}, $(\mathbb{R}\cup\{\infty\}, \oplus, \odot)$ where $a \oplus b := \min(a,b)$ and $a \odot b := a+b$.  Tropical mathematics involves studying various mathematical objects and problems which are defined using these operations.  For example, let $a^N$ denote the tropical product of $a$ with itself $N$ times; let $\mathcal{A} \subset \mathbb{N}^N$.  Tropical polynomials are piecewise linear functions:
$$
f(x_1, \ldots, x_N) = \bigoplus_{a \in \mathcal{A}} c_a \odot x_1^{a_1} \odot \cdots \odot x_N^{a_N} = \min_{a\in\mathcal{A}} (c_a + a_1x_1 + \cdots + a_N x_N).
$$
A {\em tropical hypersurface} $\mathcal{H}(f)$ is the set of all $(x_1,\ldots, x_N) \in \mathbb{R}^N$ where $f$ is attained at least twice as $a$ runs over $\mathcal{A}$.  

Notice that the Pl\"{u}cker relations (\ref{eq:plucker}) given in Proopsition \ref{prop:4pt} are tropical polynomials, and thus, the set of all phylogenetic trees constitutes a tropical hypersurface with min replaced by max.  Note also that the max-plus semiring $(\mathbb{R} \cup \{-\infty\}, \boxplus, \odot)$, where $a \boxplus b := \max(a,b)$, is isomorphic to the tropical semiring.  Thus, the four-point condition defining phylogenetic trees is tropical.

In algebraic geometry, the real Grassmannian $G_{2,N}$ is the following projective variety in the projective space $\mathbb{P}^{N-1}$:
$$
G_{2,N} = \big\{(x_{12},\, x_{13},\, \ldots,\, x_{(N-1)N}) \in \mathbb{P}^{n-1} \mid x_{ij}x_{k\ell} - x_{ik}x_{j\ell} + x_{i\ell}x_{jk} = 0 \mathrm{~for~} 1\leq i < j < k < \ell \leq N\big\}.
$$
The {\em tropical Grassmannian} $\mathcal{G}_{2,N}$ is then obtained by replacing the polynomial by its equivalent with standard operations replaced by their tropical equivalents (a process often referred to as ``tropicalization'') and the vanishing set by intersections of tropical hypersurfaces.  In other words, $\mathcal{G}_{2,N}$ is given by the intersection of tropical hypersurfaces $\mathcal{H}(x_{ij}\odot x_{k\ell} \oplus x_{ik}\odot x_{j\ell} \oplus x_{i\ell} \odot x_{jk})$ for $1 \leq i < j < k < \ell \leq N$.

To visualize $\mathcal{G}_{2,N}$, we have the following behavior of images through the sequence of maps (\ref{eq:maps}): the image of $\mathcal{G}_{2,N}$ in $\mathbb{R}^n/\mathbb{R}\one$ is a fan $\mathcal{G}'_{2,N}$ of dimension $(2N-2)$; the image of $\mathcal{G}'_{2,N}$ in $\mathbb{R}^n/\operatorname{im}(\varphi)$ is a fan $\mathcal{G}''_{2,N}$ of dimension $N-3$; and intersecting $\mathcal{G}''_{2,N}$ with the unit sphere yields a polyhedral complex $\mathcal{G}'''_{2,N}$, where each facet $\mathcal{G}'''_{2,N}$ is a polytope of dimension $N-4$.  It turns out that $\mathcal{G}''_{2,N}$ coincides with $\mathcal{T}_{N-1}^{\mathrm{BHV}}$, $\mathcal{G}'''_{2,N}$ coincides with $\mathcal{L}_{N-1}$ (\ref{eq:link}), and the image of $\mathcal{W}$ is precisely the tropical Grassmannian $\mathcal{G}_{2,N}$ \citep{Speyer2004}.  

\begin{example}
As an illustrative example, we study the case of $N = 4$ leaves: $\mathcal{G}_{2,4}$ is the hypersurface $\mathcal{H}(x_{12} \odot x_{34} \oplus x_{13} \odot x_{24} \oplus x_{14} \odot x_{23})$, which is the collection of points such that at least one of the following systems holds: $x_{12} + x_{34} = x_{13} + x_{24} \leq x_{14} + x_{23},\,\, x_{12} + x_{34} = x_{14} + x_{23} \leq x_{13} + x_{24},\,\, x_{14} + x_{23} = x_{13} + x_{24} \leq x_{12} + x_{34}.$
\begin{figure}
\centering
\includegraphics[scale=0.25]{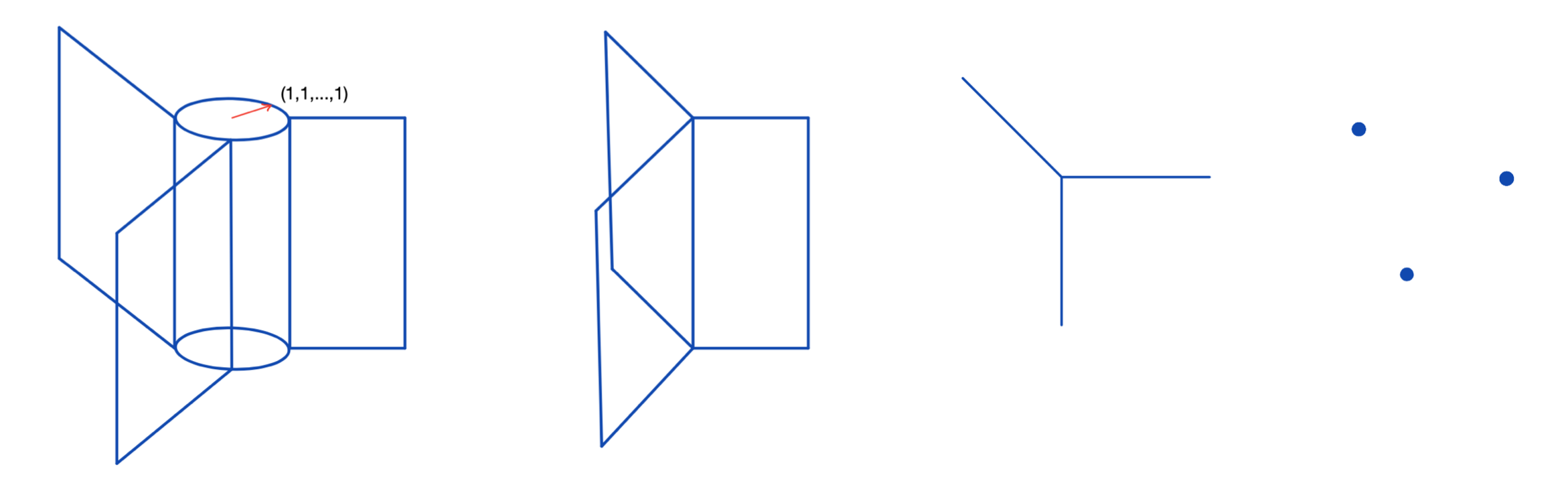}
\caption{Visualizing the tropical Grassmannian $\mathcal{G}_{2,4}$.  From left to right, we have the images of $\mathcal{G}_{2,4}$, $\mathcal{G}'_{2,4}$, $\mathcal{G}''_{2,4}$, and $\mathcal{G}'''_{2,4}$ under \eqref{eq:maps}.  Notice that $\mathcal{G}''_{2,4}$ is $\mathcal{T}_3^{\mathrm{BHV}}$ and $\mathcal{G}'''_{2,4}$ is $\mathcal{L}_3$.}
\label{fig:tropgrass4} 
\end{figure}
For each system, equality determines a 5-dimensional hyperplane in $\mathbb{R}^6$, while inequality determines a closed half-space in $\mathbb{R}^6$.  Their intersection is isomorphic to $\mathbb{R}^4 \times \mathbb{R}_{\geq 0}$.  Since there are three systems, $\mathcal{G}_{2,4}$ is the union of three copies of $\mathbb{R}^4 \times \mathbb{R}_{\geq 0}$ glued along the space $x_{12} + x_{34} = x_{13} + x_{24} = x_{14} + x_{23}$, which is the image of $\varphi: \mathbb{R}^4 \rightarrow \mathbb{R}^6$.  $\mathcal{G}''_{2,4}$ then consists of three copies of $\mathbb{R}_{\geq 0}$ (i.e., $\mathcal{T}_3^{\mathrm{BHV}}$; see also Example \ref{ex:sticky}) and $\mathcal{G}'''_{2,4}$ consists of three points (i.e., $\mathcal{L}_3$).
\end{example}


\section{Palm Tree Space}
\label{sec:palm}

A fundamental requirement to comparative and statistical studies on the tropical geometric interpretation of phylogenetic tree space is a metric.  On the tropical projective torus, a {\em generalized Hilbert projective metric} has been used in other settings \citep[e.g.,][]{joswig2007josephine, AKIAN20113261, COHEN2004395}.  We adapt this metric in our studies and refer to it as the {\em tropical metric}.  

In this section, we review the tropical metric and study its properties, especially in relation to the BHV metric.  We then present our main contribution, which is a formal and theoretical study of mathematical properties of the metric space $(\mathcal{T}_N,\, \dtr)$ which we refer to as {\em palm tree space} (tropical tree space).  We show that palm tree space possesses fundamental characteristics for studies in probability and statistics to be well-defined; namely, that it is a Polish space.

\subsection{The Tropical Metric}

\begin{definition}
\label{def:tropicalmetric}
For two points $[x], [y] \in \mathbb{R}^n/\mathbb{R}\one$, consider the distance between $[x]$ and $[y]$ given by 
$$
\dtr([x], [y]) := \max_{1 \leq i < j \leq n} \big| (x_i - y_i) - (x_j - y_j) \big| = \max_{1 \leq i \leq n} (x_i - y_i) - \min_{1 \leq i \leq n}(x_i - y_i). 
$$
We refer to the function $\dtr$ as the {\em tropical metric}.
\end{definition}

\begin{proposition}
	The function $d_{\mathrm{tr}}$ is a well-defined metric function on $\mathbb{R}^{n}/\mathbb{R}\one$.
\end{proposition}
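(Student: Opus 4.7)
The plan is to verify four things: that $d_{\mathrm{tr}}$ is independent of the choice of coset representatives, that it is nonnegative and separates points, that it is symmetric, and that it satisfies the triangle inequality. None of these is deep; the job is bookkeeping around the quotient by $\R\one$.

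First, for well-definedness, I would pick alternative representatives $u' = u + s\one$ and $v' = v + t\one$ of $\bar u$ and $\bar v$, for arbitrary $s,t \in \R$, and observe that the scalar shifts cancel inside the difference of differences:
\begin{equation*}
(u'_i - v'_i) - (u'_j - v'_j) = \bigl((u_i+s) - (v_i+t)\bigr) - \bigl((u_j+s) - (v_j+t)\bigr) = (u_i - v_i) - (u_j - v_j),
\end{equation*}
so the maximum over $i,j$ is unchanged. This also justifies passing freely between the two formulations given in Definition~\ref{def:tropicalmetric}, since $\max_i(u_i-v_i) - \min_i(u_i-v_i) = \max_{i,j}\bigl((u_i-v_i)-(u_j-v_j)\bigr)$ and the two-sided absolute-value formulation is the same quantity by swapping the roles of $i$ and $j$.

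Next, nonnegativity is immediate from the absolute-value formulation. For the identity of indiscernibles, $d_{\mathrm{tr}}(\bar u,\bar v)=0$ forces $(u_i-v_i)$ to be independent of $i$, i.e.\ $u - v = c\one$ for some constant $c$, which is exactly the condition $\bar u = \bar v$ in $\R^n/\R\one$. Symmetry follows because negating the argument of the absolute value leaves it unchanged, i.e.\ $\bigl|(u_i - v_i) - (u_j - v_j)\bigr| = \bigl|(v_i - u_i) - (v_j - u_j)\bigr|$.

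The main step is the triangle inequality, and this is where I would be most careful. Writing $a_i := u_i - v_i$ and $b_i := v_i - w_i$, so that $u_i - w_i = a_i + b_i$, I would use the second (max-minus-min) formulation to expand
\begin{equation*}
d_{\mathrm{tr}}(\bar u,\bar w) = \max_{1\le i\le n}(a_i+b_i) - \min_{1\le i\le n}(a_i+b_i) \le \Bigl(\max_i a_i + \max_i b_i\Bigr) - \Bigl(\min_i a_i + \min_i b_i\Bigr),
\end{equation*}
where I use $\max_i(a_i+b_i) \le \max_i a_i + \max_i b_i$ and $\min_i(a_i+b_i) \ge \min_i a_i + \min_i b_i$. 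Regrouping the right-hand side as $\bigl(\max_i a_i - \min_i a_i\bigr) + \bigl(\max_i b_i - \min_i b_i\bigr)$ yields exactly $d_{\mathrm{tr}}(\bar u,\bar v) + d_{\mathrm{tr}}(\bar v,\bar w)$, closing the argument. The only subtlety to flag is the use of both equivalent formulations in Definition~\ref{def:tropicalmetric}: the max-over-pairs form makes well-definedness and symmetry transparent, while the max-minus-min form is what makes the triangle inequality a one-line estimate.
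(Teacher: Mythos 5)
Your proof is correct and follows essentially the same route as the paper's: a direct verification of the metric axioms, with the triangle inequality reduced to a one-line estimate (you use subadditivity of $\max$ and superadditivity of $\min$ on the sums $a_i+b_i$, where the paper instead fixes the maximizing pair $(i',j')$ and splits $u_{i'}-w_{i'}-u_{j'}+w_{j'}$ through $v$; the two arguments are interchangeable). Your explicit check that $d_{\mathrm{tr}}$ is independent of the choice of coset representatives is a small point of added rigor that the paper leaves implicit.
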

\begin{proof}
We verify that the defining properties of metrics are satisfied.  By definition, for $[u],[v] \in \mathbb{R}^{n}/\mathbb{R}\one$, $d_{\mathrm{tr}}([u], [v])= d_{\mathrm{tr}}([v], [u])$, satisfying symmetry.  The tropical metric is nonnegative, since $\big| (u_{i}-v_{i}) - (u_{j} - v_{j}) \big| \ge 0$, so is $d_{\mathrm{tr}}([u],[v])\geq 0$.  If $d_{\mathrm{tr}}([u],[v])=0$, then $u_{i}-v_{i}$ are equal for all $1\le i\le n$, thus $[u] = [v]$, so indiscernibles are identifiable.  

For $[u],[v],[w] \in \mathbb{R}^{n}/\mathbb{R}\one$, we now show that triangle inequality is satisfied: $d_{\mathrm{tr}}([u],[w])\le d_{\mathrm{tr}}([u],[v])+d_{\mathrm{tr}}([v],[w])$. Suppose $1\le i'<j'\le n$ such that
$$
\big|(u_{i'}-w_{i'})-(u_{j'}-w_{j'}) \big|=\max_{1\le i<j\le n}{|u_{i}-w_{i}-u_{j}+w_{j}|},
$$
then $d_{\mathrm{tr}}([u],[w])=|u_{i'}-w_{i'}-u_{j'}+w_{j'}|$. Note that
$
		u_{i'}-w_{i'}-u_{j'}+w_{j'}=(u_{i'}-v_{i'}-u_{j'}+v_{j'})+(v_{i'}-w_{i'}-v_{j'}+w_{j'}).
$
	Hence 
	\begin{align*}
		d_{\mathrm{tr}}([u],[w])=|u_{i'}-w_{i'}-u_{j'}+w_{j'}| & \le |u_{i'}-v_{i'}-u_{j'}+v_{j'}|+|v_{i'}-w_{i'}-v_{j'}+w_{j'}|\\
		& \le d_{\mathrm{tr}}([u],[v])+d_{\mathrm{tr}}([v],[w]).
	\end{align*}
Thus, $d_{\mathrm{tr}}$ is a well-defined metric function on $\mathbb{R}^{n}/\mathbb{R}\one$. 
\end{proof}

Notice that the metric space $(\mathbb{R}^{n}/\mathbb{R}\one, \dtr)$ can be identified with the normed linear space $\mathbb{R}^{n-1}$ via the linear isomorphism $\pi: \mathbb{R}^n/\mathbb{R}\one \rightarrow \mathbb{R}^{n-1}$ with $[x] \mapsto (x_2 - x_1,\, \ldots,\, x_n - x_1)$.  $\pi$ is in fact an isometry: define a norm on $\mathbb{R}^{n-1}$ by $\| x \|_{\mathrm{tr}} := \max(\max|x_i - x_j|,\, \max|x_i|)$ and denote the induced distance by $\hat{d}_{\mathrm{tr}}$, then
\begin{equation}
\label{eq:isometry}
\begin{aligned}
\dtr([x],[y]) & = \max\bigg(\max_{2 \leq i < j \leq n}|(x_i - y_i) - (x_j - y_j)|,\, \max_{2\leq i \leq n}|(x_i - x_1) - (y_i - y_1)| \bigg)\\
& = \| \pi([x]) - \pi([y])\|_{\mathrm{tr}} = \hat{d}_{\mathrm{tr}}(\pi([x]), \pi([y])).
\end{aligned}
\end{equation}
by choosing representatives such that $x_1 = y_1$, allowing us to drop indices and simplify the expression.  Note that $\| \cdot \|_{\mathrm{tr}}$ is a well-defined norm, since we may always add an extra 0-valued coordinate to each $x$ to obtain an element $[x]$ in the tropical projective torus.  The norm $\|x\|_{\mathrm{tr}}$ is then $\dtr([x],0)$ and the triangle inequality is satisfied; homogeneity under usual scalar multiplication and positive definiteness are also satisfied.

Restricting to the subspace of phylogenetic trees equipped with the tropical metric gives the following construction.

\begin{definition}
\label{def:palm}
For a positive integer $N$, let $\mathcal{T}_{N}$ be the space of phylogenetic trees with $N$ leaves.  The metric space $\mathcal{P}_N := (\mathcal{T}_{N}, d_{\mathrm{tr}} )$ is called the {\em palm tree space}.
\end{definition}

The spaces $\mathcal{T}_N^{\mathrm{BHV}}$ and $\mathcal{P}_N$ are not isometric, meaning that absolute lengths measured by each metric are not consistent.  To understand the variation in length discrepancy, we study the stability of the tropical metric $d_{\mathrm{tr}}$ and find that perturbations of points in BHV space, measured by the BHV metric $d_{\mathrm{BHV}}$, correspond to bounded perturbations of their images in palm tree space, measured by the tropical metric.  This stability property is desirable, since it allows for interpretable comparisons between the two spaces, and allows for ``translations'' in the widely-used BHV framework over to palm tree space.

The following lemma ensures coordinate-wise stability of the tropical metric in $\mathcal{P}_N$.

\begin{lemma}\label{lem:stable}
	Let $u\in \mathbb{R}^{n}$.  For $1\le i\le n$, if we perturb the $i$th coordinate of $u$ by $\varepsilon > 0$ to obtain another point $u'\in \mathbb{R}^{n}$, then in $\mathbb{R}^{n}/\mathbb{R}\one$ we have 
$
		d_{\mathrm{tr}}([u],[u'])=\varepsilon.
$
\end{lemma}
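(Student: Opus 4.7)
The plan is to substitute directly into the second form of Definition \ref{def:tropicalmetric}, since the perturbation has a very simple effect on the vector of coordinate-wise differences $u-u'$. Concretely, writing the perturbation as $u' = u + \varepsilon e_i$, where $e_i$ is the $i$\textsuperscript{th} standard basis vector of $\R^n$, we have $u_j - u'_j = 0$ for all $j \neq i$ and $u_i - u'_i = -\varepsilon$. So the set of differences $\{u_j - u'_j : 1 \le j \le n\}$ consists of the single nonzero value $-\varepsilon$ together with $n-1$ zeros.

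Next I would feed this into the expression
\begin{equation*}
d_{\mathrm{tr}}(\bar{u},\bar{u}') = \max_{1 \le j \le n}(u_j - u'_j) - \min_{1 \le j \le n}(u_j - u'_j).
\end{equation*}
A short sign case-split finishes the computation: if $\varepsilon \geq 0$, then the maximum is $0$ (attained at any index $j \neq i$) and the minimum is $-\varepsilon$ (attained at $i$), giving $d_{\mathrm{tr}}(\bar{u},\bar{u}') = 0 - (-\varepsilon) = \varepsilon = |\varepsilon|$; if $\varepsilon < 0$, then the maximum is $-\varepsilon > 0$ and the minimum is $0$, again giving $|\varepsilon|$. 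Alternatively, one can avoid the case split by using the first form of the definition: the pair $(i,j)$ with any $j \neq i$ contributes $|(u_i - u'_i) - (u_j - u'_j)| = |\varepsilon|$, which is clearly the maximum over all pairs since every other pair contributes $0$.

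There is no real obstacle here; the only subtlety worth flagging is that the statement tacitly assumes $n \geq 2$ (so that an index $j \neq i$ exists to witness the value $0$ among the differences), which is always the case in palm tree space since $n = \binom{N}{2} \geq 3$ for any $N \geq 3$. The lemma itself is best read as a coordinate-wise Lipschitz stability statement, which will be what is later used to ensure continuity arguments and to compare the tropical metric against coordinate perturbations arising from edge-length changes in a phylogenetic tree.
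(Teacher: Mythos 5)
Your proof is correct and follows essentially the same route as the paper's: compute the set of coordinate-wise differences $\{u_j - u'_j\}$, observe it consists of one value $\pm\varepsilon$ together with zeros, and read off $\max - \min = |\varepsilon|$ from Definition \ref{def:tropicalmetric}. Your explicit sign case-split and the remark that $n \geq 2$ is needed for a zero difference to exist are slightly more careful than the paper's one-line computation, but the argument is the same.
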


\begin{proof}
	For $1\le j\le n$, the difference $u'_{j}-u_{j}=0$ if $j\ne i$, and $u'_{i}-u_{i}=\pm \varepsilon$.  The set of these differences is then either $\{0,\varepsilon\}$ or $\{0,-\varepsilon \}$.  By Definition \ref{def:tropicalmetric}, $d_{\mathrm{tr}}([u],[u'])=|0-\pm \varepsilon|=|\varepsilon|$.
\end{proof}

\begin{theorem}[Stability]
\label{thm:stability}
Let $N$ be the number of leaves for phylogenetic trees in palm tree space and BHV space.  Let $u$ and $u'$ be two phylogenetic trees with $N$ leaves.  Then 
$$
d_{\mathrm{tr}}(u, u') \leq \sqrt{N+1} \cdot d_{\mathrm{BHV}}(u, u').
$$
Moreover, the smallest possible constant is $\sqrt{N+1}$.
\end{theorem}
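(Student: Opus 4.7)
The plan is to establish the inequality first within a single orthant of BHV space, where both trees share the same topology, and then promote it to the general statement via the piecewise-linear structure of BHV geodesics. Suppose $u$ and $u'$ lie in a common orthant with edge-length vectors $b, b'$, and set $\delta_e := b_e - b'_e$ for each edge $e$. Each cophenetic coordinate is then linear in the edge lengths, $u_{ij} - u'_{ij} = \sum_{e \in P_{ij}} \delta_e$, so the BHV distance within the orthant is the Euclidean distance $d_{\mathrm{BHV}}(u,u')^2 = \sum_e \delta_e^2$, while the tropical distance expands as
\[
d_{\mathrm{tr}}(u,u') \;=\; \max_{(i,j),(k,l)} \biggl| \sum_{e \in P_{ij} \triangle P_{kl}} \pm \delta_e \biggr|,
\]
where the signs are determined by which of the two paths contains $e$, and the maximum ranges over pairs of coordinate indices.

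For each fixed pair of coordinates, the Cauchy--Schwarz inequality yields
\[
\biggl| \sum_{e \in P_{ij} \triangle P_{kl}} \pm \delta_e \biggr|
\;\leq\; \sqrt{\,|P_{ij} \triangle P_{kl}|\,}\; \sqrt{\sum_e \delta_e^2}
\;=\; \sqrt{\,|P_{ij} \triangle P_{kl}|\,}\; d_{\mathrm{BHV}}(u,u'),
\]
so the within-orthant inequality reduces to a purely combinatorial estimate on the size of a symmetric difference of two leaf-to-leaf paths in a tree with $N$ leaves.

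The core combinatorial claim I would prove is that $|P_{ij} \triangle P_{kl}| \leq N + 1$ for every choice of (not necessarily distinct) leaves $i,j,k,l$. The idea is that, for four distinct leaves, the set $P_{ij} \triangle P_{kl}$ coincides with the edge set of the minimal subtree of $T$ spanning $\{i,j,k,l\}$ with the central path joining the two branching hubs removed. This spanning subtree has four pendant edges, a small number of branching vertices of degree at least three, and additional degree-two ``subdivision'' vertices whose extra branches in $T$ each absorb at least one of the remaining $N-4$ leaves. A careful degree count then gives the bound. The degenerate case in which the four indices collapse to three leaves reduces via the identity $P_{ij} \triangle P_{il} = P_{jl}$ to an analogous diameter estimate handled by the same leaf-counting argument.

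Finally, to promote the within-orthant bound to the global statement, I would use the piecewise-linear structure of BHV geodesics: the geodesic $\gamma$ from $u$ to $u'$ is a concatenation of Euclidean segments through finitely many orthants, and applying the within-orthant bound to each segment, together with the triangle inequality for $d_{\mathrm{tr}}$ and additivity of length along $\gamma$, yields the global Lipschitz bound with the same constant. For the sharpness claim, I would exhibit a caterpillar-type tree on $N$ leaves with a distinguished four-tuple realizing the maximal symmetric difference, together with an edge-length perturbation proportional to the signed indicator of $P_{ij} \triangle P_{kl}$, forcing equality in Cauchy--Schwarz. The main obstacle is the combinatorial estimate: the delicate accounting of subdivision vertices and absorbed leaves in the spanning subtree, and the matching construction of an explicit extremal pair whose ratio converges to $\sqrt{N+1}$.
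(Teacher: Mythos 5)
Your proposal is correct and follows essentially the same route as the paper: a within-orthant estimate via Cauchy--Schwarz, promotion to the general case by decomposing the BHV geodesic into segments each lying in a single orthant and using the triangle inequality for $d_{\mathrm{tr}}$ together with additivity of BHV length, and a caterpillar construction for sharpness. The only real difference is the within-orthant bookkeeping: you bound each coordinate-pair difference $\bigl|(u_{ij}-u'_{ij})-(u_{kl}-u'_{kl})\bigr|$ by Cauchy--Schwarz over the symmetric difference $P_{ij}\triangle P_{kl}$, whereas the paper bounds $\max_{(i,j)}\sum_{e\in P_{ij}}\delta_e$ and $\min_{(k,l)}\sum_{e\in P_{kl}}\delta_e$ separately (sum of all positive internal-edge differences plus the two largest pendant differences, and dually for the minimum), arriving at a sum of at most $(N-3)+4=N+1$ absolute values before applying Cauchy--Schwarz; these are two ways of organizing the same count. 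One reassurance: the combinatorial estimate you identify as the main obstacle is immediate and requires no spanning-subtree or degree analysis. The set $P_{ij}\triangle P_{kl}$ is contained in $P_{ij}\cup P_{kl}$, which contains at most the four pendant edges of $i,j,k,l$ and at most all $N-3$ internal edges of the tree, so $|P_{ij}\triangle P_{kl}|\le 4+(N-3)=N+1$ at once (and the three-leaf degenerate case gives $\le N-1$ via $P_{ij}\triangle P_{il}=P_{jl}$). For sharpness, note that equality is attained exactly rather than only in the limit: the paper's caterpillar pair has edge-length differences equal to $\pm 1$ on all $N-3$ internal edges and on four pendant edges, which is precisely your ``signed indicator'' perturbation forcing equality in Cauchy--Schwarz, and it yields $d_{\mathrm{tr}}=N+1$ against $d_{\mathrm{BHV}}=\sqrt{N+1}$.
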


\begin{proof}
	We first prove that for any two trees $u,u'$ in vector representation (\ref{eq:W}) with $N$ leaves, $d_{\mathrm{tr}}(u, u') \leq \sqrt{N+1}\cdot d_{\mathrm{BHV}}(u,u')$.  First, assume that $u,u'$ belong to the same orthant in BHV space.  Then no matter what the tree topology is, if we denote the differences of the lengths of the $N-2$ internal edges in $u$ and $u'$ (see (\ref{eq:link})) by $d_{1},d_{2},\ldots,d_{N-2}$, and the differences of the length of the $N$ external edges by $p_{1},p_{2},\ldots,p_{N}$, we always have 
$
		d_{\mathrm{BHV}}(u,u') = \sqrt{\sum_{i=1}^{N-2}{d_{i}^2}+\sum_{i=1}^{N}{p_{i}^2}}.
$
	
	For every pair of leaves $i,j$ in both trees, the distance between them is a sum of the length of some internal edges and two external edges.  In other words, all differences $w^{u}_{ij}-w^{u'}_{ij}$ are of the form of the sum between some $d_{k}$, and $p_{i}+p_{j}$.  Thus, the maximum of these differences is at most the sum of all positive $d_{i}$ values, plus the two greatest $p_{i}$ values (take these to be $p_{i_{1}}$ and $p_{i_{2}}$), while the minimum of these differences is at least the sum of all negative $d_{i}$ values, plus two smallest $p_{i}$ values (take these to be $p_{i_{3}}$ and $p_{i_{4}}$).  By definition, $d_{\mathrm{tr}}(u,u')$ is the maximum minus the minimum of these differences, so we have
	\begin{equation*}
		d_{\mathrm{tr}}(u,u') \le \sum_{i=1}^{N-2}{|d_{i}|}+|p_{i_{1}}|+|p_{i_{2}}|+|p_{i_{3}}|+|p_{i_{4}}|.
	\end{equation*}
	By the Cauchy--Schwarz inequality,
	\begin{equation*}
		(N+1)\cdot \Bigg(\sum_{i=1}^{N-2}{|d_{i}|^{2}}+|p_{i_{1}}|^{2}+|p_{i_{2}}|^{2}+|p_{i_{3}}|^{2}+|p_{i_{4}}|^{2} \Bigg) \ge \Bigg(\sum_{i=1}^{N-2}{|d_{i}|}+{|p_{i1}|+|p_{i_{2}}|+|p_{i_{3}}|+|p_{i_{4}}|} \Bigg)^2.
	\end{equation*}
Hence
	\begin{equation*}
		\begin{split}
			d_{\mathrm{tr}}(u,u') & \le \sum_{i=1}^{N-2}{|d_{i}|}+|p_{i_{1}}|+|p_{i_{2}}|+|p_{i_{3}}|+|p_{i_{4}}| \\
			& \le \sqrt{N+1}\cdot \sqrt{\sum_{i=1}^{N-2}{|d_{i}|^{2}}+|p_{i_{1}}|^{2}+|p_{i_{2}}|^{2}+|p_{i_{3}}|^{2}+|p_{i_{4}}|^{2}} \\
			& \le \sqrt{N+1}\cdot \Bigg(\sum_{i=1}^{N-2}{|d_{i}|^{2}}+\sum_{i=1}^{N}{p_{i}^2} \Bigg) \\
			& = \sqrt{N+1}\cdot d_{\mathrm{BHV}}(u,u').
		\end{split}
	\end{equation*}
	
	Now, for $u,u'$ with distinct tree topologies, we consider the unique geodesic connecting them: there exist finitely many points $u^{1},\ldots,u^{k-1}$ in BHV space such that $u^{i}$ and $u^{i+1}$ belong to the same orthant corresponding to a tree topology for $0\le i\le k-1$, where $u^{0}=u$ and $u^{k}=u'$, and $d_{\mathrm{BHV}}(u,u')=\sum_{i=0}^{k-1}{d_{\mathrm{BHV}}(u^{i},u^{i+1})}$. For $1\le i\le k-1$, by the proof above, we have that
$
		d_{\mathrm{tr}}(u^{i},u^{i+1}) \le \sqrt{N+1}\cdot d_{\mathrm{BHV}}(u^{i},u^{i+1}) \quad \forall\quad 1\le i\le k-1.
$
	Thus,
	$$
		d_{\mathrm{tr}}(u,u')\le \sum_{i=0}^{k-1}{d_{\mathrm{tr}}(u^{i},u^{i+1})} \le \sum_{i=0}^{k-1}{\sqrt{N+1}\cdot d_{\mathrm{BHV}}(u^{i},u^{i+1})}
		=\sqrt{N+1}\cdot d_{\mathrm{BHV}}(u,u').
	$$
	
	Next, we consider the case where the equality holds: consider two trees $t$ and $t'$ with $N$ leaves and the same tree topology, given by the following nested sets
$$
	\big\{\{1,2\},\{1,2,3\},\ldots,\{1,2,\ldots,N-2\} \big\}.
$$
	Let $e_i$ be the internal edge labeled by the $i$th clade in the nested sets listed previously.  Suppose in $t$, the internal edges have lengths
$$
	b^{t}(e_i) = \begin{cases}
	2, & \text{ if } 1\le i\le N-4; \\
	1, & \text{ if } i=N-3.
	\end{cases}
$$	
	Similarly, in $t'$, the internal edges have lengths
$$
	b^{t'}(e_i) = \begin{cases}
	1, & \text{ if } 1\le i\le N-4; \\
	2, & \text{ if } i=N-3.
	\end{cases}
$$
	The external edge lengths of $t$ and $t'$ are
$$	
	p^{i}_{j}=\begin{cases}
	1, & \text{ if } (i,j) = (1,2), (1,N-2), (2,N-1), (2,N); \\
	0, & \text{ otherwise}.
	\end{cases}
$$
Then 
$
		d_{\mathrm{BHV}}(t,t')=\sqrt{(N-4)\cdot (2-1)^{2}+(1-2)^2+2\cdot (1-0)^{2}+2\cdot (0-1)^{2}}=\sqrt{N+1}.
$
For $1\le i<j \le N$, in either tree the distance $w_{ij}$ is the sum of the edge lengths of 
$$
		p_{i},\, e_{\max(i-1,1)},\, e_{\max(i-1,1)+1},\, \ldots,\, e_{\max(j-2,N-2)},\, p_{j}.
$$
	Since $b^{t}(e_i)>b^{t'}(e_i)$ for $i<N-3$ and $b^{1}(e_i)<b^{2}(e_i)$ for $i=N-3$, the maximum of all differences $w^{t}_{ij}-w^{t'}_{ij}$ is 
$
	w^{t}_{2(N-2)} - w^{t'}_{2(N-2)} = ((N-4)\cdot 2 +2\cdot 1) - (N-4)\cdot 1 = N-2;
$
	and the minimum of all differences $w^{t}_{ij}-w^{t'}_{ij}$ is 
$
	w^{t}_{(N-2)(N-1)} - w^{t'}_{(N-2)(N-1)} = 1 - (2+1+1) = -3.
$
	By definition, $d_{\mathrm{tr}}(t,t')=(N-2)-(-3)=N+1=\sqrt{N+1}\cdot d_{\mathrm{BHV}}(t,t')$ in this case.  Thus, $\sqrt{N+1}$ is the smallest possible stability constant.  
\end{proof}

In general, and especially data applications, the number of leaves is fixed prior to the study so the stability constant $\sqrt{N+1}$ is indeed a constant.

We note that explicit calculations involving geodesics between trees in the original reference by \cite{BILLERA2001733} do not include external edges, since these do not modify the geometry of the space.  Indeed, their inclusion only amounts to an additional Euclidean factor, since the tree space then becomes the cross product of BHV space of trees with internal edges only, and $\mathbb{R}_{\geq 0}^N$.  Geodesic distances, which depend directly on geodesic paths (the former is the length of the latter), considered in \cite{BILLERA2001733} also do not include external edges.  In the proof of Theorem \ref{thm:stability}, we follow the quartic-time algorithm of \cite{Owen:2011:FAC:1916480.1916603} to compute BHV distances which includes external edge lengths, not only because it is the fastest algorithm to date but also necessary in this comparative setting, since the tropical distance is defined by external edge lengths.

In terms of interpretation, Theorem \ref{thm:stability} provides an important comparative measure and guarantees that quantitative results from BHV space are bounded in palm tree space.  For example, in single-linkage clustering, where clusters are fully determined by distance thresholds, the stability result means that a given clustering pattern in BHV space will be preserved in palm tree space, thus maintaining interpretability of clustering behavior.

\begin{figure}
\centering
 \begin{tikzpicture}[scale=0.4]
    	\draw (0,0) -- (3,5) -- (5,2);
    	\filldraw [black] (0,0) circle (1pt);
\filldraw [black] (5,2) circle (1pt);
    	\node [below] at (0,0) {$34$};
	\node [below] at (5,2) {$12$};
    	\node at (4.75,3.5) {$k_2$};
	\node at (0.75,2.5) {$k_1$};
    	\node at (0,4.5) {$T_{1}$};
    	
    	\draw (10,0) -- (13,5) -- (15,2);
    	\filldraw [black] (10,0) circle (1pt);
	\filldraw [black] (15,2) circle (1pt);
    	\node [below] at (10,0) {$12$};
	\node [below] at (15,2) {$34$};
    	\node at (14.75,3.5) {$k_1$};	
	\node at (10.75,2.5) {$k_2$};
    	\node at (10.5,4.5) {$T_{2}$};   	
    \end{tikzpicture}
    
  \caption{An example for non-isometry.  $T_1$ and $T_2$. }\label{fig:noniso_ex} 
\end{figure}

Note also that the reverse inequality does not hold in palm tree space: there is no general bound for the BHV distance in terms of the tropical distance.  Consider trees in $\mathcal{T}_4^{\operatorname{BHV}}$ leaves depicted in Figure \ref{fig:BHV4}.  In particular, consider the upper righthand orthant defined by the axes $D_{34}$ and $D_{12}$.  In Figure \ref{fig:noniso_ex}, take a tree $T_1$ with a coordinate representing the length of the internal edge of the $\{12\}$ clade at a distance $k_1$ from the origin, while the coordinate representing the length of the internal edge of the $\{34\}$ clade is at distance $k_2$ from the origin.  Similarly, take a tree $T_2$ with the coordinate representing the length of the internal edge of the $\{1,2\}$ clade at a distance $k_2$ from the origin, while the coordinate representing the length of the internal edge of the $\{34\}$ clade is at distance $k_1$ from the origin.  Then $T_1$ can be vectorized to $(k_1, k_2)$ while $T_2$ can be vectorized to $(k_2, k_1)$ and $\dtr(T_1, T_2) = 0$, but $d_{\mathrm{BHV}}(T_1, T_2) = \sqrt{(k_1 - k_2)^2 + (k_1-k_2)^2} = \sqrt{2(k_1-k_2)^2} = \sqrt{2}(k_1 - k_2) > 0$.  This example also shows that the tropical metric and BHV metric are not isometric.

\subsection{Geometry of Palm Tree Space}

The uniqueness property of geodesics in BHV space, used in the proof of Theorem \ref{thm:stability}, leads naturally to the study of similar geometric properties that characterize palm tree space as well as important differences between the two spaces.  These characteristics will now be developed in this section.

\subsubsection{Geodesics in Palm Tree Space}
\label{subsec:geodesics}

In palm tree space, geodesics are in general not unique, which is a common occurrence in various metric spaces.  There exists, however, a unique path connecting two points in ultrametric tree space within palm tree space, which is also a geodesic: the tropical line segment.

\begin{definition}
\label{def:trop_line}
Given $[x], [y] \in \mathbb{R}^n/\mathbb{R}\one$, the {\em tropical line segment} with endpoints $[x]$ and $[y]$ is the set
$$
\{a \odot [x] \oplus b \odot [y] \in \mathbb{R}^n/\mathbb{R}\one \mid a, b \in \mathbb{R} \},
$$
where $\odot$ is tropical multiplication and tropical addition $\oplus$ for two vectors is performed coordinate-wise.
\end{definition}

\begin{proposition}
\label{prop:geodesic}
For two trees $t, t' \in \mathcal{U}_N$, the tropical line segment connecting $t$ and $t'$ is a geodesic.
\end{proposition}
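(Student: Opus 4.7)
The plan is to prove two assertions: (i) the tropical line segment $S := \{a\odot x \boxplus b\odot y : a,b\in\R\}$ lies entirely inside $\mathcal{T}_N$, and (ii) there is a continuous parametrization of $S$ with $x$ and $y$ as endpoints whose arclength under $d_{\mathrm{tr}}$ equals $d_{\mathrm{tr}}(x,y)$.  Together, (i) and (ii) exhibit a path in $\mathcal{P}_N$ connecting $x$ and $y$ whose length realizes the distance between them, which is exactly the definition of a geodesic (Definition \ref{def:geodesic}).

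For (i), the key fact is that $\mathcal{T}_N$ is \emph{tropically convex}, meaning closed under the coordinate-wise operations $\boxplus$ and $\odot$.  This follows from the result of \cite{Speyer2004} identifying $\mathcal{T}_N$ with the tropical Grassmannian $\mathrm{trop}\,\mathrm{Gr}(2,N)$, combined with the general fact that tropical varieties are tropically convex in the max-plus convention.  In particular, the tropical segment $S$ between two tree metrics $x,y$ consists entirely of tree metrics, so $S \subset \mathcal{P}_N$.

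For (ii), I would normalize representatives in $\R^n$ so that $y_i - x_i \geq 0$ for all $i$ and $\min_i(y_i - x_i) = 0$; then $M := \max_i(y_i - x_i) = d_{\mathrm{tr}}(x,y)$ by Definition \ref{def:tropicalmetric}.  Define the continuous curve $\gamma:[0,M]\to \R^n/\R\one$ by $\gamma(t)_i := \max(x_i,\, y_i + t - M)$.  Then $\gamma(t) = 0 \odot x \boxplus (t-M)\odot y \in S$, with endpoints $\gamma(0) = x$ (since $y_i - M \leq x_i$ for all $i$) and $\gamma(M) = y$ (since $y_i \geq x_i$ for all $i$); conversely, reducing $S$ modulo $\R\one$ confirms that every point of $S$ arises as $\gamma(t)$ for some $t \in [0,M]$.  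The key computation is $d_{\mathrm{tr}}(\gamma(s), \gamma(t)) = t-s$ for $0\leq s\leq t\leq M$: the coordinate-wise difference $\gamma(t)_i - \gamma(s)_i$ always lies in $[0,\, t-s]$, with the lower value $0$ attained at any index where $y_i = x_i$ (the $x$-term dominates both maxes) and the upper value $t-s$ attained at any index where $y_i - x_i = M$ (the $y$-term dominates both maxes).  Hence $d_{\mathrm{tr}}(\gamma(s),\gamma(t)) = (t-s) - 0 = t-s$, and summing over any partition of $[0,M]$ gives arclength exactly $M = d_{\mathrm{tr}}(x,y)$.

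The main obstacle I anticipate is step (i): although tropical convexity of $\mathcal{T}_N$ follows cleanly from \cite{Speyer2004}, a direct self-contained verification from the four-point condition \eqref{eq:4pt} requires handling the ``mixed'' terms $a+b+x_{ij}+y_{kl}$ and $a+b+y_{ij}+x_{kl}$ that appear when expanding $z_{ij}\odot z_{kl}$, and these do not factor through a single application of the four-point inequality for either $x$ or $y$ alone.  Step (ii) is essentially mechanical once the parametrization $\gamma$ is in place, and is morally a multi-coordinate analogue of the stability computation in Lemma \ref{lem:stable}.
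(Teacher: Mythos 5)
Your step (ii) is correct, and it is essentially the paper's own proof in a reparametrized form: the paper takes an arbitrary point $z=a\odot x\boxplus b\odot y$ on the segment, sorts the coordinates of $x-y$, locates where $b-a$ falls among them, and concludes $d_{\mathrm{tr}}(z,x)+d_{\mathrm{tr}}(z,y)=d_{\mathrm{tr}}(x,y)$. Your computation $d_{\mathrm{tr}}(\gamma(s),\gamma(t))=t-s$ is the same coordinate bookkeeping packaged as an isometric embedding of $[0,M]$, which is if anything a little cleaner, since it gives additivity along the whole path rather than betweenness of a single point.

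The genuine problem is step (i): the space $\mathcal{T}_N$ of tree metrics is \emph{not} tropically convex, and the ``general fact'' you invoke is not a fact --- only tropical \emph{linear} spaces (tropicalizations of linear ideals) are tropically convex, whereas $\mathcal{T}_N$ is the tropicalization of the \emph{quadratic} Pl\"ucker ideal. Concretely, for $N=4$ let $x$ be the quartet tree $12|34$ with pendant edge lengths $(10,1,1,1)$ and internal edge length $1$, and $y$ the quartet tree $13|24$ with pendant edge lengths $(1,10,1,1)$ and internal edge length $1$, so that in the coordinate order $(w_{12},w_{13},w_{14},w_{23},w_{24},w_{34})$,
\begin{align*}
x &= (11,\,12,\,12,\,3,\,3,\,2), \qquad y = (12,\,2,\,3,\,12,\,11,\,3), \qquad z := x\boxplus y = (12,\,12,\,12,\,12,\,11,\,3).
\end{align*}
Both $x$ and $y$ satisfy the four-point condition (\ref{eq:4pt}): the triples of Pl\"ucker sums are $(13,15,15)$ and $(15,13,15)$, each attaining its maximum twice. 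But for $z$ the sums are $z_{12}+z_{34}=15$, $z_{13}+z_{24}=23$, $z_{14}+z_{23}=24$, so the maximum is attained only once and $z\notin\mathcal{T}_4$, even though $z$ is an interior point of the tropical segment. This is exactly the failure mode you flagged with the ``mixed terms'': they are not a technical inconvenience but a genuine obstruction, and no appeal to \cite{Speyer2004} repairs it. To your credit, you have put your finger on something the paper glosses over --- its proof works entirely in the ambient space $\R^{n}/\R\one$ and never verifies the containment demanded by Definition \ref{def:geodesic} --- and the paper establishes tropical convexity only for the subspace of ultrametrics $\mathcal{U}_N$ (Proposition \ref{prop:tropline}), where your step (i) would in fact go through. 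As stated for all of $\mathcal{P}_N$, however, the containment claim in your step (i) is false.
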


\begin{proof}
It suffices to show that for any $a,b\in \mathbb{R}$, we have that
$
  	  d_{\mathrm{tr}}(z,t)+d_{\mathrm{tr}}(z,t') = d_{\mathrm{tr}}(t,t'),
$
where $z=a\odot t \oplus b\odot t'$ lies on the tropical line segment.  We may assume that $t_{i}-t'_{i}\leq t_{i+1}-t'_{i+1}$ for $1\le i \le n-1$.  Under this assumption, $d_{\mathrm{tr}}(t,t')=(t_{n}-t'_{n})-(t_{1}-t'_{1})$.  Now if $0\le j\le n$ is the largest index such that $t_{j}-t'_{j}\le b-a$, then for some $i\ge j+1$, $z_{i}=b+t'_{i}$ and for $i \leq j$, $z_{i}=a+t_{i}$.  If $j=0$ or $j=n$, then $z$ is equal to either $t$ or $t'$ and the claim is apparent.  We may thus assume $1\le j\le n-1$. 
 
  The set of all differences $t_{i}-z_{i}$ contains $-a$ and the greater values $t_{i}-t'_{i}-b > -a$ for $i\ge j+1$.  So,
$
  	d_{\mathrm{tr}}(z,t)=(t_{n}-t'_{n}-b)-(-a)=(t_{n}-t'_{n})+(a-b).
$
Similarly, the set of all differences $z_{i}-t'_{i}$ contains $b$ and the smaller values $(t_{i}-t'_{i})+a \leq b$ for $i\le j$.  So,
$
  d_{\mathrm{tr}}(z,t')=b-(t_{1}-t'_{1}+a)=(b-a)-(t_{1}-t'_{1}).
$
  Therefore, $d_{\mathrm{tr}}(z,t)+d_{\mathrm{tr}}(z,t') = d_{\mathrm{tr}}(t,t'),$ and the tropical line segment connecting $t$ and $t'$ is a geodesic.
\end{proof}

In addition, it turns out that tropical line segments are easy and fast to compute.  In particular, the time complexity to compute them is lower than the state-of-the-art in BHV space \citep{Owen:2011:FAC:1916480.1916603}.

\begin{proposition}{\cite[Proposition 5.2.5]{maclagan2015introduction}}
The time complexity to compute the tropical line segment connecting two points in $\mathbb{R}^n/\mathbb{R}\one$ is $O(n\, \log\, n) = O(N^2\, \log\, N)$.
\end{proposition}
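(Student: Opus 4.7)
My plan is to reduce the computation of the tropical line segment to sorting $n$ real numbers, which is classically achievable in $O(n \log n)$ time by any optimal comparison-based algorithm.

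First, I would observe that any point on the segment has coordinates $z_i = \max(a + x_i,\ b + y_i)$, and since we work in $\R^n/\R\one$, only the single parameter $t := b - a$ is meaningful: subtracting $a$ from every coordinate, $z_i$ reduces to $\max(x_i,\ t + y_i)$. Thus the tropical line segment is a one-parameter family in $t \in \R$, recovering $\bar{x}$ as $t \to -\infty$ and $\bar{y}$ as $t \to +\infty$ (after appropriate normalization).

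The key observation is that coordinate $i$ switches from being determined by $x_i$ to being determined by $t + y_i$ exactly when $t$ crosses the threshold $t_i := x_i - y_i$. Hence the bend points of the piecewise-linear path along the segment correspond bijectively to the sorted order of the $n$ values $t_1, \ldots, t_n$; at each bend, exactly one coordinate transitions from being ``$x$-dominated'' to being ``$y$-dominated'', and the coordinates of each successive vertex of the path can be updated incrementally from those of the previous vertex.

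Computing the $n$ differences $t_i = x_i - y_i$ takes $O(n)$ time, and sorting them takes $O(n \log n)$. These two operations together determine the combinatorial and geometric description of the line segment, so the overall complexity is $O(n \log n)$. Substituting $n = \binom{N}{2} = O(N^2)$ yields $O(N^2 \log N)$. The main subtlety, rather than a genuine obstacle, is identifying the correct one-parameter reduction in the quotient $\R^n/\R\one$; once this reduction is in hand, the complexity statement follows immediately from the fact that describing the segment is essentially a sorting problem on $n$ real numbers.
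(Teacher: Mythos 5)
Your argument is correct and is essentially the standard proof of the cited result: the paper itself gives no proof (it defers to Proposition 5.2.5 of Maclagan--Sturmfels), but your reduction of the segment to the one-parameter family $z_i = \max(x_i, t + y_i)$ with breakpoints at the sorted values $t_i = x_i - y_i$ is exactly the content of that proposition, and it is also what the paper's Algorithm 1 in the appendix implements by sorting the coordinate differences $\lambda'$. The complexity accounting $O(n \log n) = O(N^2 \log N)$ with $n = \binom{N}{2}$ is likewise as intended.
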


\subsubsection{Structure of Palm Tree Space}

In the same way that $\mathcal{T}_N^{\mathrm{BHV}}$ is constructed as the union of orthants, the geometry of $\mathcal{P}_N$ is also given by such a union.

\begin{proposition}\cite[Proposition 4.3.10]{maclagan2015introduction}
\label{prop:polyhedra}
The space $\mathcal{T}_{N}$ is the union of $(2N-5)!!$ polyhedra in $\mathbb{R}^n/\mathbb{R}\one$, each of dimension $N-3$.
\end{proposition}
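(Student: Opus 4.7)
The plan is to stratify $\mathcal{T}_N$ by the combinatorial tree topology of each point, identify the top-dimensional strata with unrooted binary trees on $[N]$, and then verify the count and the dimension.

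First, I would invoke the Buneman-style decomposition implicit in the four-point condition of Definition~\ref{def:extrinsic}: every $w\in\mathcal{T}_N$ can be written uniquely as a non-negative combination $w=\sum_{e\in T} b_e\,\chi^{S_e}$, where $T$ is some unrooted tree on $[N]$, $\chi^{S_e}\in\R^n$ is the split metric associated to the bipartition of $[N]$ induced by deleting the edge $e$, and $b_e\ge 0$. For each unrooted binary topology $T$, I would define $P_T\subseteq\R^n/\R\one$ as the image in the quotient of all such non-negative combinations over the edges of $T$. As the image of a closed orthant under a linear map, $P_T$ is a polyhedron, and its relative interior consists of precisely those tree metrics whose underlying topology is exactly $T$. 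Every $w\in\mathcal{T}_N$ lies in at least one such $P_T$, since any unrooted tree on $[N]$ can be refined (by inserting zero-length edges) into a binary one.

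Second, I would count the binary topologies by induction on $N$: each unrooted binary tree on $N$ leaves is obtained from one on $N-1$ leaves by subdividing one of its $2N-5$ edges with the new leaf, and starting from the unique binary tree on three leaves yields
\begin{equation*}
3\cdot 5\cdots (2N-5)=(2N-5)!!
\end{equation*}
distinct topologies, one per top-dimensional polyhedron. For the dimension of each $P_T$, I would use that a binary unrooted tree has $N$ pendant edges and $N-3$ internal edges; the pendant split vectors together with $\one$ span a subspace common to every $P_T$ that accounts for the lineality of the fan, while the $N-3$ internal split vectors of $T$ are pairwise compatible and therefore linearly independent modulo this lineality, contributing precisely $N-3$ essential dimensions to $P_T$ in the quotient.

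The main technical obstacle I anticipate is justifying the linear independence of the internal split vectors of a binary tree modulo the pendant lineality and $\R\one$. This reduces to the combinatorial fact that a pairwise compatible system of splits yields linearly independent split metrics, which is the crux of Buneman's theorem and the point at which the four-point condition, hence the actual tree structure, is used in an essential way. Once this independence is in hand, the claimed dimension $N-3$ follows from a direct count of the edge directions that remain after quotienting, and the union ranges over exactly the $(2N-5)!!$ binary topologies enumerated above.
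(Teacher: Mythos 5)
The paper does not actually prove this statement: it imports it verbatim as Proposition 4.3.10 of \cite{maclagan2015introduction}, where it is established by computing the tropical Grassmannian $\Trop(Gr(2,N))$ and identifying its maximal cones with trivalent trees. So there is no in-paper proof to match against; what you have written is an independent (and essentially standard) combinatorial argument via the Buneman split decomposition. Your outline is correct in its main steps: the stratification of $\mathcal{T}_N$ by topology, the identification of top-dimensional strata with unrooted binary trees, the inductive count $3\cdot5\cdots(2N-5)=(2N-5)!!$, and the reduction of the dimension claim to linear independence of compatible split metrics (which you rightly flag as the one fact needing a real proof; it is equivalent to injectivity of the edge-lengths-to-metric map and is Buneman's theorem). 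Two points of care. First, the dimension: the cone $P_T$ you define, which includes the $N$ pendant directions, has dimension $2N-4$ in $\R^n/\R\one$, not $N-3$; the figure $N-3$ is the dimension modulo the common lineality space spanned by the pendant split vectors (note $\one=\tfrac12\sum_i\chi^{\{i\}}$ lies in that span). Your write-up is consistent with this reading, and it is also how the paper intends the statement (compare Proposition \ref{prop:polyhedra_ultra}, where $N-2$ counts internal edges of rooted binary trees), but the statement as literally phrased conflates the two, and a careful proof should say explicitly which dimension is meant. Second, under Definition \ref{def:extrinsic} the points of $\mathcal{T}_N$ in the quotient correspond to trees whose pendant coefficients may be of either sign (only the internal edge lengths are forced nonnegative by the four-point condition), so the decomposition is a nonnegative combination only over internal edges; this is harmless for your argument and in fact reinforces that the pendant directions form a lineality space while the $N-3$ internal directions are the genuinely conical ones.
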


\subsection{Topology of Palm Tree Space}

The measure of a space is relevant in probabilistic studies, since the topology of a space may be interpreted in terms of measures.  For example, Radon measures may also be interpreted as linear functionals on the space of continuous functions with compact support, which is locally convex, by e.g.,~\cite[Chapter 3]{bourbakielements}.  This motivates our study of the topology of palm tree space.

The following two lemmas allow us to characterize the topology of palm tree space.  Recall that for $x\in \mathbb{R}^{n}$, the set $B(x,r)=\{y\in \mathbb{R}^{n} \mid |y-x|<r\}$, with $|\cdot|$ taken to be the Euclidean norm, is the open ball centered at $x$ with radius $r$.  By identifying $\mathbb{R}^{n}/\mathbb{R}\one$ with $\mathbb{R}^{n-1}$ via (\ref{eq:isometry}), an equivalent set may be correspondingly defined in palm tree space by considering the tropical norm as follows.

\begin{definition}\label{def:openball}
Under the tropical metric $d_{\mathrm{tr}}$, we define
$$
B_{\mathrm{tr}}(x,r)=\{y\in \mathbb{R}^{n} \mid d_{\mathrm{tr}}((0, y),\, (0, x) )<r \}
$$
to be the open {\em tropical ball} centered at $x \in \mathbb{R}^{n}$ with radius $r$.
\end{definition}

\begin{lemma}\label{lem:unitball}
	For $x,y \in \mathbb{R}^{n-1}$ and $r>0$, the open tropical ball $B_{\mathrm{tr}}(x,r)$ is the open convex polytope defined by the following strict inequalities for $1\le i<j\le n-1$:
		\begin{equation}\label{eq:ball}
		\begin{split}
			y_{i} & >x_{i}-r, \\
			y_{i} & <x_{i}+r, \\
			y_{i}-y_{j} & >x_{i}-x_{j}-r, \\
			y_{i}-y_{j} & <x_{i}-x_{j}+r.\\
		\end{split}
	\end{equation} 
\end{lemma}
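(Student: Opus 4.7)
The plan is to unpack the definition of $d_{\mathrm{tr}}$ applied to the points $(0,\bar{y})$ and $(0,\bar{x})$ in $\R^n$, where the $0$-th coordinate has been prepended. By Definition \ref{def:tropicalmetric}, $d_{\mathrm{tr}}\bigl((0,\bar{y}),(0,\bar{x})\bigr)$ equals the maximum, over all pairs of coordinate indices, of the absolute difference between the corresponding entries of the difference vector $(0,\bar{y})-(0,\bar{x})$. Indexing the $n$ coordinates by $0,1,\dots,n-1$ and writing $a_0 := 0$ and $a_i := y_i - x_i$ for $1 \le i \le n-1$, this becomes $d_{\mathrm{tr}} = \max_{0 \le i,j \le n-1} |a_i - a_j|$.

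From here, the condition $d_{\mathrm{tr}}((0,\bar{y}),(0,\bar{x})) < r$ is equivalent to requiring $|a_i - a_j| < r$ for every pair of indices $i,j$, and this splits into two cases according to whether the prepended $0$-th coordinate is involved. First, for pairs of the form $(0,i)$ with $1 \le i \le n-1$, the condition $|a_0 - a_i| = |y_i - x_i| < r$ is precisely the pair of strict inequalities $x_i - r < y_i < x_i + r$, which are the first two families in (\ref{eq:ball}). Second, for pairs $(i,j)$ with $1 \le i < j \le n-1$, the condition $|a_i - a_j| = |(y_i - y_j) - (x_i - x_j)| < r$ is exactly the pair of strict inequalities $x_i - x_j - r < y_i - y_j < x_i - x_j + r$, which are the last two families in (\ref{eq:ball}). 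Pairs with $i > j$ produce redundant inequalities by symmetry.

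Conversely, if every inequality in (\ref{eq:ball}) holds, then each $|a_i - a_j|$ is strictly less than $r$, so the maximum is strictly less than $r$ and $y \in B_{\mathrm{tr}}(x,r)$. Since the constraints in (\ref{eq:ball}) are affine in $y$ and there are finitely many of them, the solution set is a convex polytope, and since every constraint is a strict inequality it is open. This establishes that $B_{\mathrm{tr}}(x,r)$ coincides with the described open convex polytope.

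There is no real obstacle here; the argument is essentially a bookkeeping exercise that translates the max-minus-min formulation of $d_{\mathrm{tr}}$ from Definition \ref{def:tropicalmetric} into the equivalent statement that \emph{all} pairwise coordinate differences of $(0,\bar{y}) - (0,\bar{x})$ are bounded in absolute value by $r$. The only mild subtlety worth flagging is remembering to include the prepended $0$-th coordinate as one of the indices when enumerating pairs, which is what produces the two families of inequalities involving a single $y_i$ alongside the two families involving the differences $y_i - y_j$.
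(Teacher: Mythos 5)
Your proof is correct and is essentially the paper's argument spelled out in full: the paper's own proof simply observes that $y\in B_{\mathrm{tr}}(x,r)$ iff $d_{\mathrm{tr}}\bigl((0,\bar{x}),(0,\bar{y})\bigr)<r$ and that Definition \ref{def:tropicalmetric} yields the inequalities in (\ref{eq:ball}). Your careful case split between pairs involving the prepended zero coordinate and pairs among the remaining coordinates is exactly the bookkeeping the paper leaves implicit.
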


\begin{proof}
For $y\in \mathbb{R}^{n-1}$, $y\in B_{\mathrm{tr}}(x,r)$ if and only if $d_{\mathrm{tr}}((0,x),(0,y) )<r$.  Definition \ref{def:tropicalmetric} admits the strict inequalities in (\ref{eq:ball}).
\end{proof}

\begin{lemma}\label{lem:contain}
For $r>0$ and $x\in \mathbb{R}^{n-1}$, $B(x,r) \subseteq B_{\mathrm{tr}}(x,2r)$ and $ B_{\mathrm{tr}}(x,r) \subseteq B(x,\sqrt{n-1}r)$.
\end{lemma}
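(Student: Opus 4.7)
The plan is to use the explicit description of the tropical ball from Lemma \ref{lem:unitball}, which realizes $B_{\mathrm{tr}}(x,r)$ as the polytope in $\R^{n-1}$ cut out by the $2(n-1) + 2\binom{n-1}{2}$ strict inequalities $|y_i - x_i| < r$ and $|(y_i - y_j) - (x_i - x_j)| < r$. Both containments then reduce to coordinate-wise estimates, so this is a short direct computation rather than a geometric argument.

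For the first containment $B(x,r) \subseteq B_{\mathrm{tr}}(x,2r)$, I would fix $y \in B(x,r)$, so $\|y-x\|_2 < r$. Each single-coordinate inequality $|y_i - x_i| \le \|y - x\|_2 < r < 2r$ is then immediate, and for each pair $i \ne j$ the triangle inequality gives
\[
|(y_i - y_j) - (x_i - x_j)| \le |y_i - x_i| + |y_j - x_j| \le 2\|y-x\|_2 < 2r,
\]
so by Lemma \ref{lem:unitball} the point $y$ lies in $B_{\mathrm{tr}}(x,2r)$.

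For the second containment $B_{\mathrm{tr}}(x,r) \subseteq B(x,\sqrt{n-1}\,r)$, I would fix $y \in B_{\mathrm{tr}}(x,r)$ and use only the single-coordinate inequalities $|y_i - x_i| < r$ from Lemma \ref{lem:unitball}. Squaring and summing yields
\[
\|y - x\|_2^2 = \sum_{i=1}^{n-1}(y_i - x_i)^2 < (n-1)r^2,
\]
so $\|y-x\|_2 < \sqrt{n-1}\,r$ and $y \in B(x,\sqrt{n-1}\,r)$.

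There is no real obstacle here; the statement is essentially a comparison of the $\ell^2$ norm with the seminorm $\max_i |z_i| \vee \max_{i \ne j}|z_i - z_j|$ on $\R^{n-1}$, and the factor $2$ in the first inclusion comes from the triangle inequality on differences while the factor $\sqrt{n-1}$ in the second comes from the standard $\ell^\infty$-to-$\ell^2$ comparison. The only thing to be careful about is to invoke the characterization in Lemma \ref{lem:unitball} in the correct ambient space $\R^{n-1}$ (after the identification $\R^n/\R\one \cong \R^{n-1}$), so that the Euclidean ball $B(x,r)$ and the tropical ball $B_{\mathrm{tr}}(x,r)$ are both subsets of the same space and can be compared directly.
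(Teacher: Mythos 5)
Your proposal is correct and follows essentially the same route as the paper: both containments are deduced from the coordinate description of $B_{\mathrm{tr}}(x,r)$ in Lemma \ref{lem:unitball}, with the factor $2$ coming from the triangle inequality on pairwise differences and the factor $\sqrt{n-1}$ from the $\ell^\infty$-to-$\ell^2$ comparison. You even make explicit the sum-of-squares step that the paper leaves implicit.
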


\begin{proof}
By Lemma \ref{lem:unitball}, if a point $y$ lies in $B_{\mathrm{tr}}(x,r)$, then for $1\le i\le n-1$, $|y_{i}-x_{i}|<r$, thus $y\in B(x,\sqrt{n-1}r)$.  Conversely, if a point $y$ lies in $B(x,r)$, then for $1\le i\le n-1$, we have that $|y_{i}-x_{i}|<r$. Therefore, 
	\begin{equation*}
	\big|(y_{i}-y_{j})-(x_{i}-x_{j}) \big| = \big|(y_{i}-x_{i})-(y_{j}-x_{j}) \big|<2r.
	\end{equation*}
Hence $y\in B_{\mathrm{tr}}(x,2r)$.
\end{proof}

\begin{theorem}\label{thm:openset}
On $\mathbb{R}^{n-1}$, the family of open balls $B(x,r)$ and the family of open tropical balls $B_{\mathrm{tr}}(x,r)$ define the same topology.
\end{theorem}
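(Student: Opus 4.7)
The plan is to show that the two collections of balls are mutually refining in the sense of basic-neighborhood systems, from which the equality of topologies follows by the standard criterion: two metric-type topologies coincide if and only if every ball in one contains a ball in the other around each of its points. Since both $\{B(x,r)\}$ and $\{B_{\mathrm{tr}}(x,r)\}$ are bases for their respective topologies, it suffices to show that for every $x\in\R^{n-1}$ and every $r>0$, there exist $r_1,r_2>0$ with
\begin{equation*}
B_{\mathrm{tr}}(x,r_1)\subseteq B(x,r)\quad\text{and}\quad B(x,r_2)\subseteq B_{\mathrm{tr}}(x,r).
\end{equation*}
Both inclusions will be obtained directly from Lemma \ref{lem:contain}, so essentially no further work is needed.

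For the first inclusion, I would apply the second containment in Lemma \ref{lem:contain} with $r$ replaced by $r/\sqrt{n-1}$, yielding $B_{\mathrm{tr}}(x,r/\sqrt{n-1})\subseteq B(x,r)$; thus any Euclidean-open set $U\ni x$ with $B(x,r)\subseteq U$ also contains $B_{\mathrm{tr}}(x,r/\sqrt{n-1})$, so $U$ is open in the tropical topology. For the second inclusion, I would apply the first containment in Lemma \ref{lem:contain} with $r$ replaced by $r/2$, giving $B(x,r/2)\subseteq B_{\mathrm{tr}}(x,r)$; thus any tropically-open set $V\ni x$ with $B_{\mathrm{tr}}(x,r)\subseteq V$ also contains $B(x,r/2)$, so $V$ is open in the Euclidean topology.

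Combining these two observations, every set open in one topology is open in the other, hence the two topologies on $\R^{n-1}$ coincide. There is no serious obstacle here: the essential content has already been packaged into Lemma \ref{lem:contain}, and the present theorem is the immediate topological consequence. The only thing to be careful about is to invoke the criterion that coinciding neighborhood bases (in the sense above) give rise to the same topology, rather than trying to compare individual balls directly.
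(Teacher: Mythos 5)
Your proof is correct and takes essentially the same route as the paper's: both arguments reduce the statement to Lemma \ref{lem:contain} together with the standard fact that a set is open in a metric topology iff it contains a ball of that metric around each of its points. The paper phrases the recentering step explicitly inside a tropical ball before invoking the lemma, while you invoke the lemma with rescaled radii directly, but the content is identical.
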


\begin{proof}
Suppose for all $r>0$ and $x\in \mathbb{R}^{n-1}$ that the open balls $B(x,r)$ form a topological basis.  For any $y\in \mathbb{R}^{n-1}$ and $s>0$, we consider the ball $B_{\mathrm{tr}}(y,s)$: For any point $z\in B_{\mathrm{tr}}(y,s)$, we have that $d_{\mathrm{tr}}(z,y)<s$.  Let $\displaystyle \varepsilon=\frac{s-d_{\mathrm{tr}}(z,y)}{2}>0$.  Then $B_{\mathrm{tr}}(z,2\varepsilon)\subseteq B_{\mathrm{tr}}(y,s)$.  By Lemma \ref{lem:contain}, we have $B(z,\varepsilon)\subseteq B_{\mathrm{tr}}(z,2\varepsilon) \subseteq B_{\mathrm{tr}}(y,s)$.  Therefore, $B_{\mathrm{tr}}(y,s)$ is also an open set.  The other direction is proved in the same manner.
\end{proof}

\begin{example}\label{ex:balls}	
Figure \ref{fig:balls} illustrates the unit balls in Euclidean, BHV, and palm tree space.  Here, the number of leaves is fixed to be $3$.  There are three $1$-dimensional cones in BHV space, and they share the origin.  The palm tree space $\mathcal{P}_3 = \{w=(w_{12},w_{13},w_{23})\in \mathbb{R}^{3}/\mathbb{R}\one \mid \max(w) \text{ is attained at least twice} \}$ may be embedded in $\mathbb{R}^{2}$.

	\begin{figure}[h]
		\centering
		\begin{minipage}{0.3\textwidth}
			\centering
			\begin{tikzpicture}
				\draw [fill=yellow,thick] (0,0) circle (1cm);
				\filldraw [black] (0,0) circle (2pt);
				\node [above] at (0,0) {$(0,0)$};
			\end{tikzpicture}
		\end{minipage}
		\begin{minipage}{0.3\textwidth}
			\centering
			\begin{tikzpicture}[scale=0.4]
			\draw (0,3) -- (0,0) -- (3,0);
			\draw (0,0) -- (-1.5,-2);
			\draw [fill=yellow,ultra thick] (0,1) -- (0,0);
			\draw [fill=yellow,ultra thick] (0,0) -- (1,0);
			\draw [fill=yellow,ultra thick] (-0.6,-0.8) -- (0,0);
			\filldraw [black] (0,0) circle (2pt);
			\node [above right] at (0,0) {$(0,0,0)$};
			\node [right] at (0,3) {$\{1,2\}$};
			\node [right] at (3,0) {$\{1,3\}$};
			\node [left] at (-1,-2) {$\{2,3\}$};
			\end{tikzpicture}
		\end{minipage}
		\begin{minipage}{0.3\textwidth}
			\centering
			\begin{tikzpicture}[scale=0.8]
				\draw [fill=yellow,thick] (1,0) -- (1,1) -- (0,1) -- (-1,0) -- (-1,-1) -- (0,-1) -- (1,0);
				\filldraw [black] (0,0) circle (2pt);
				\node [above right] at (0,0) {$[(0,0,0)]$};
			\end{tikzpicture}
		\end{minipage}
		\caption{Comparison of unit balls in Euclidean, BHV, and palm tree space for $N=3$ leaves.  The leftmost figure is the unit ball $B((0,0),\, 1)$ in $\mathbb{R}^2$; the center figure is the unit ball centered at the origin with radius 1 in a BHV space with 3 leaves; the rightmost figure is the unit ball $B_{\mathrm{tr}}([(0,0,0)],\, 1)$ in $\mathcal{P}_{3}$.}
		\label{fig:balls}
	\end{figure}
\end{example}

\subsection{Palm Tree Space is a Polish Space}
\label{subsec:polish}

We now show that additional analytic properties of palm tree space that are desirable for probabilistic and statistical analysis are satisfied.  Specifically, we prove that palm tree space is a separable, completely metrizable topological space, and thus a Polish space, by definition.

Polish spaces are important settings for studies in probability due to the fact that classical results may be formulated and generalized in a well-behaved manner; some examples are the construction of conditional expectations, Kolmogorov's extension theorem (which guarantees the definition of a stochastic process from a series of finite-dimensional distributions), and Prokhorov's theorem (which guarantees weak convergence by relating tightness of measures to compactness in a probability space) \citep{1967}.

\begin{proposition}
$\mathcal{P}_{N}$ is complete.
\end{proposition}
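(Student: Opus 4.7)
The plan is to realize $\mathcal{P}_N$ as a closed subspace of the ambient metric space $(\R^n/\R\one,\,d_{\mathrm{tr}})$ and to show that this ambient space is itself complete; since closed subspaces of complete metric spaces are complete, this will yield the result.

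First, I would verify completeness of $(\R^n/\R\one,\,d_{\mathrm{tr}})$. Using the canonical representation $\bar{u}\mapsto(u_2-u_1,\ldots,u_n-u_1)$ employed in Lemma \ref{lem:unitball}, the quotient $\R^n/\R\one$ is identified with $\R^{n-1}$. The two inclusions $B(x,r)\subseteq B_{\mathrm{tr}}(x,2r)$ and $B_{\mathrm{tr}}(x,r)\subseteq B(x,\sqrt{n-1}\,r)$ of Lemma \ref{lem:contain} are equivalent to the bi-Lipschitz estimates
\begin{equation*}
d_{\mathrm{tr}}(x,y)\le 2\,\|y-x\|_2 \quad\text{and}\quad \|y-x\|_2\le\sqrt{n-1}\,d_{\mathrm{tr}}(x,y).
\end{equation*}
Hence a sequence is $d_{\mathrm{tr}}$-Cauchy if and only if it is Euclidean-Cauchy, with the same limit under either metric, so completeness transfers immediately from $(\R^{n-1},\|\cdot\|_2)$ to $(\R^n/\R\one,\,d_{\mathrm{tr}})$.

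Next, I would show that $\mathcal{T}_N$ is closed in $\R^n/\R\one$. By Definition \ref{def:extrinsic}, $\mathcal{T}_N$ is cut out by the finite collection of weak inequalities
\begin{equation*}
w_{ij}+w_{kl}\le\max\bigl(w_{ik}+w_{jl},\,w_{il}+w_{jk}\bigr),\qquad 1\le i<j<k<l\le N.
\end{equation*}
Adding a common constant $c$ to every coordinate shifts both sides by $2c$, so each inequality is invariant under the $\R\one$ action and descends to a well-defined condition on the quotient. Because sum and $\max$ are continuous, each inequality cuts out a closed subset of $\R^n/\R\one$, and $\mathcal{T}_N$ is the intersection of finitely many such sets, hence closed.

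The only nontrivial bookkeeping is the verification that the four-point condition is preserved by the quotient and defines a closed subset there; once this is in place, completeness of $\mathcal{P}_N$ is an immediate consequence of the bi-Lipschitz equivalence with $\R^{n-1}$, and no genuine obstacle remains.
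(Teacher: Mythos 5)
Your proof is correct and follows essentially the same route as the paper's: the paper establishes completeness of the ambient quotient by showing a $d_{\mathrm{tr}}$-Cauchy sequence is coordinatewise Cauchy and converges in $d_{\mathrm{tr}}$ to its coordinatewise limit (your bi-Lipschitz step, already implicit in Lemma \ref{lem:contain}), and then verifies that the limit still satisfies the four-point condition via a contradiction argument (your closedness step). The only difference is packaging --- you invoke the general fact that a closed subset of a complete metric space is complete, while the paper carries out both steps by hand on a given Cauchy sequence.
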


\begin{proof}
For convenience, when considering points in $\mathcal{P}_{N}$, we always choose their unique preimage in $\mathbb{R}^{n}$ whose first coordinate is $0$.  Then, we may denote each point in $\mathcal{P}_{N}$ by an $(n-1)$-tuple in $\mathbb{R}^{n-1}$.  Let $t_{1},t_{2},\ldots \in \mathbb{R}^{n-1}$ be a Cauchy sequence of points in $\mathcal{P}_{N}$.  For $1\le i\le n-1$, we claim that $(t_{k}^{i})_{k\ge 1}$ also form a Cauchy sequence in $\mathbb{R}$: For any $\varepsilon>0$, there exists $M$ such that for $k_{1},k_{2}>M$, we have $d_{\mathrm{tr}}(t_{k_{1}},t_{k_{2}})<\varepsilon$.  By Definition \ref{def:tropicalmetric}, $d_{\mathrm{tr}}(t_{k_{1}},t_{k_{2}}) \ge |0-0-t_{k_{2}}^{i}+_{k_{1}}^{i}|=|t_{k_{1}}^{i}-t_{k_{2}}^{i}|$.  Thus, for $k_{1},k_{2}>M$, we have 
$
		\big|t_{k_{1}}^{i}-t_{k_{2}}^{i} \big|<\varepsilon.
$
	
Suppose now that the Cauchy sequence $(t_{k}^{i})_{k\ge 1}$ converges in the Euclidean metric to $t_{0}^{i} \in \mathbb{R}$.  It suffices to show
	\begin{itemize}
		\item[(i)] $t_{0}=(t_{0}^{1},t_{0}^{2},\ldots,t_{0}^{n-1})$ represents a point in $\mathcal{P}_{N}$;
		\item[(ii)] $\lim\limits_{k\to \infty}{d_{\mathrm{tr}}(t_{k},t_{0})}=0$.
	\end{itemize}
	To show (ii), we argue that since $(t_{k}^{i})_{k\ge 1}$ converges to $t_{0}^{i}$ for all $1\le i\le n-1$, then for any $\varepsilon>0$ there exists $M$ such that for $k > M$, we have $|t_{k}^{i}-t_{0}^{i}|<\frac{\varepsilon}{2}$ for all $1\le i\le n-1$.  Then by Definition \ref{def:tropicalmetric},
$$
			d_{\mathrm{tr}}(t_{k},t_{0})=\max_{1\le i\le n-1}{\big(0,t_{k}^{i}-t_{0}^{i} \big)}-\min_{1\le i\le n-1}{\big(0,t_{k}^{i}-t_{0}^{i}\big)}
			<\frac{\varepsilon}{2}-\Big(-\frac{\varepsilon}{2} \Big)=\varepsilon.
$$
	So $\lim\limits_{k\to \infty}{d_{\mathrm{tr}}(t_{k}-t_{0})}=0$.
	
	To show (i), note that each coordinate of $t_{0}$, including the first, is the limit of the corresponding coordinates of $(t_{k})_{k\ge 1}$.  Suppose $t_{0} \notin \mathcal{P}_{N}$, then there exists $1\le i<j<k<l\le N$ such that one term of $t_{0}$ in (\ref{eq:plucker}) is strictly greater than the remaining two.  Then there exists $M_{2}$ such that for all $k>M_{2}$, the one term of $t_{k}$ in (\ref{eq:plucker}) is also strictly greater than the remaining two, thus $t_{k} \notin \mathcal{P}_{N}$---a contradiction.  Hence (i) holds, and $\mathcal{P}_{N}$ is complete.
\end{proof}

\begin{proposition}
$\mathcal{P}_{N}$ is separable.
\end{proposition}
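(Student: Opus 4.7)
The plan is to construct a countable dense subset of $\mathcal{P}_N$ directly from the polyhedral decomposition established in Proposition \ref{prop:polyhedra}. A naive attempt to use $\mathbb{Q}^{n-1}$ as a dense subset of $\R^{n}/\R\one$ fails because rational cophenetic vectors need not satisfy the four-point condition \eqref{eq:4pt}, so they need not lie in $\mathcal{P}_N$. Instead, density must be arranged inside each tree-topology stratum.

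First I would invoke Proposition \ref{prop:polyhedra}: $\mathcal{T}_N$ is the union of finitely many (namely $(2N-5)!!$) polyhedra $P_\tau \subseteq \R^{n}/\R\one$, one for each tree topology $\tau$, each of dimension $N-3$. For each $P_\tau$, the set of points with rational coordinates in $P_\tau$ is countable and dense in $P_\tau$ with respect to the Euclidean topology on $\R^{n}/\R\one$; equivalently, for each topology $\tau$ the phylogenetic trees of that topology with rational internal and external edge lengths form a countable dense subset of $P_\tau$, since the map from rational edge lengths to the induced cophenetic vector is continuous and has rational image.

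Second, by Theorem \ref{thm:openset}, the topology induced on $\R^{n}/\R\one$ by $d_{\mathrm{tr}}$ coincides with the usual Euclidean topology. Hence a subset of $P_\tau$ is dense with respect to $d_{\mathrm{tr}}$ if and only if it is dense with respect to the Euclidean metric. Let $D_\tau \subseteq P_\tau \cap \mathcal{P}_N$ be the countable Euclidean-dense set constructed above, and set
\[
D \;:=\; \bigcup_{\tau} D_\tau,
\]
where the union is over all $(2N-5)!!$ topologies. Then $D$ is a finite union of countable sets, hence countable, and $D \subseteq \mathcal{P}_N$ by construction.

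Finally I would verify density of $D$ in $\mathcal{P}_N$: for any $x \in \mathcal{P}_N$ and any $\varepsilon > 0$, there is some topology $\tau$ with $x \in P_\tau$, and by density of $D_\tau$ in $P_\tau$ (in either equivalent topology) there exists $y \in D_\tau \subseteq D$ with $d_{\mathrm{tr}}(x,y) < \varepsilon$. The only real subtlety is ensuring the approximating points genuinely lie in $\mathcal{P}_N$ rather than merely in $\R^{n}/\R\one$; this is handled automatically by approximating within the single stratum $P_\tau \subseteq \mathcal{T}_N$, so the four-point condition is preserved throughout. This step is the main (mild) obstacle, and the polyhedral decomposition of Proposition \ref{prop:polyhedra} resolves it cleanly. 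We conclude that $\mathcal{P}_N$ is separable, and combined with the preceding completeness result, $\mathcal{P}_N$ is a Polish space.
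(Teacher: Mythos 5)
Your proposal is correct and follows essentially the same route as the paper: both arguments use the polyhedral decomposition of Proposition \ref{prop:polyhedra} to fix a tree topology, approximate the edge lengths by rationals within that stratum (so the four-point condition is automatically preserved), and conclude that trees with rational edge lengths form a countable dense subset. The only cosmetic difference is that you transfer Euclidean density to $d_{\mathrm{tr}}$-density via Theorem \ref{thm:openset}, whereas the paper bounds $d_{\mathrm{tr}}(T',T)$ directly from the coordinatewise perturbation of at most $N-1$ edges per leaf pair.
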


\begin{proof}
We claim that the set of all trees with all rational coordinates is dense in $\mathcal{P}_{N}$: Fix any tree $t=(w_{ij})\in \mathcal{P}_{N}$.  By Proposition \ref{prop:polyhedra}, $t$ belongs to a polyhedron and there exists a tree topology with $(N-3)$ internal edges.  Then the distance between any two leaves is the sum of the lengths of the edges along the unique path connecting them.  The number of edges along each path is at most $(N-1)$.  For any $\varepsilon>0$ and length $b_{k}$ of each edge of the tree $t$, since $\mathbb{Q}$ is dense in $\mathbb{R}$, we can find a rational number $q_{k}$ such that $|q_{k}-b_{k}|<\frac{1}{2(N-1)}\varepsilon$.  Now, construct another tree $t' = (w'_{ij})$ with the same topology as $t$, and with corresponding edge lengths $q_{k}$.  Then for any $1\le i<j\le N$ we have that $|w'_{ij}-w_{ij}|<\frac{\varepsilon}{2}$.
	Thus 
	\begin{equation*}
		d_{\mathrm{tr}}(t',t)=\max_{1\le i<j\le n}{(w'_{ij}-w_{ij})}-\min_{1\le i<j\le n}{(w'_{ij}-w_{ij})}<\varepsilon,
	\end{equation*}
and all coordinates of $q_{k}$ are rational.  Thus, $\mathcal{P}_N$ is separable.
\end{proof}

The above results on completeness and separability are proved by definition.  An alternative perspective that demonstrates completeness and separability is to identify the tropical projective torus and its corresponding tropical metric with $\mathbb{R}^n$ and the $\ell_\infty$ distance.  This identification has been used by \cite{ardila2005subdominant} and \cite{bernstein2017infinity, bernstein2020infinity}: Consider a linear mapping from $\mathbb{R}^N$ to $\mathbb{R}^n$ where $(x_1,\ldots,x_N) \mapsto (x_i - x_j)$ for all pairs $i < j$.  The image of such a map is isomorphic to the tropical projective torus and the tropical metric is then the $\ell_\infty$ distance on $\mathbb{R}^n$ restricted to the image of this map.  Palm tree space forms a closed subset of $\mathbb{R}^n$, since the four-point condition (Proposition \ref{prop:4pt}) defines a closed subset and $\mathbb{R}^n$ equipped with the $\ell_\infty$ distance is complete and separable.  This formulation also provides insight into the topology of palm tree space described in Theorem \ref{thm:openset}.  

Finally, compact subsets in palm tree space exist.  As an example, consider the space of ultrametric trees $\mathcal{U}_N$.  Let {\em compact tree space} $\mathcal{U}_N^{[1]}$ to be the image of $\mathcal{U}_N$ of the set of ultrametrics $W$ satisfying $\max_{i,j}(w(i,j)) = 1$.  Now, notice that the union of $\mathcal{U}_N^{[k]}$ for $1 \leq k \leq N$ is still compact and a subset is bounded if and only if it is contained in this union; in particular, if it is also closed, then it is compact.






\subsection{Probability Measures and Means in Palm Tree Space}
\label{sec:prob}

We showed in Section \ref{subsec:polish} that palm tree space is a Polish space, and thus exhibits desirable properties for rigorous probability and statistics.  Such properties ensure well-behaved measure-theoretic properties, and in particular, allow for classical probabilistic and statistical studies, such as convergence in various modes, as well as ensuring that stochastic processes are well defined.  We now study the existence of probabilistic and statistical quantities for parametric data analysis, such as probability measures and Fr\'echet means and variances.

\subsubsection{Tropical Measures of Central Tendency}
\label{subsec:means}

For distributions in general metric spaces, there are various measures of central tendency.  These may be framed in palm tree space as follows (and may be generalized by replacing the tropical metric $d_{\mathrm{tr}}$ with any well-defined metric).

\begin{definition}
Given a probability space $(\mathcal{T}_N, \mathcal B(\mathcal{T}_N), \mathbb{P}_{\mathcal{T}_N})$---where $\mathcal{T}_N$ is the set of all possible outcomes, $\mathcal B(\mathcal{T}_N)$ is the event space or set of outcomes in $\mathcal{T}_N$ (taken here to be the $\sigma$-algebra generated by the open tropical balls $B_{\mathrm{tr}}$ of $\mathcal{T}_N$), and $\mathbb{P}_{\mathcal{T}_N})$ is a probability function that assigns a proability to each event in the event space---the quantity
\begin{equation}\label{eq:frechet_variance}
\text{Var}_{\mathbb{P}_{\mathcal{T}_N}}(t) = \int_{\mathcal{T}_N} d_{\mathrm{tr}}(t, t')^2 d\nu(t') < \infty
\end{equation}
is known as the {\em tropical Fr\'echet variance}.  The minimizer of the quantity (\ref{eq:frechet_variance}) is the {\em tropical Fr\'echet population mean} or {\em barycenter} $\mu_F$ of a distribution $\nu$:
\begin{equation}\label{eq:frechet}
\mu^F_{\mathrm{tr}} = \arg\min_{t} \int_{\mathcal{T}_N} d_{\mathrm{tr}}(t, t')^2 d\nu(t') < \infty.
\end{equation}
\end{definition}


For general metric spaces, neither existence nor uniqueness of (\ref{eq:frechet}) 
is guaranteed.  The following condition ensures the existence of barycenters \citep{Ohta2012} .

\begin{lemma}{\cite[Lemma 3.2]{Ohta2012}}\label{lemma:frechet_existence}
If $(M, d)$ is a proper metric space---that is, a metric space where every closed, bounded subspace is compact---then any distribution $\nu$ where $\int_M d(t, t')^2 d\nu(t') < \infty$ has a barycenter.
\end{lemma}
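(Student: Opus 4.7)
The plan is to minimize the Fr\'echet functional $F(y) := \int_M d(y,x)^2 \, d\nu(x)$ by the direct method of the calculus of variations, using the properness of $M$ to supply the decisive compactness. First, I would verify that $F$ is finite everywhere: by hypothesis there is some $y_0 \in M$ with $F(y_0) < \infty$, and the elementary inequality $d(y,x)^2 \leq 2 d(y,y_0)^2 + 2 d(y_0,x)^2$ integrates to $F(y) \leq 2 d(y,y_0)^2 + 2 F(y_0) < \infty$ for every $y \in M$. The same bookkeeping, together with $|d(y,x)^2 - d(z,x)^2| \leq d(y,z) \cdot (d(y,x) + d(z,x))$ and the Cauchy--Schwarz inequality against the probability measure $\nu$, shows that $F$ is continuous (indeed locally Lipschitz) on $M$.

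The decisive step is a coercivity estimate. Using the reverse triangle inequality in the form $d(y,x)^2 \geq d(y,y_0)^2 - 2 d(y,y_0) \, d(y_0,x)$ (which holds regardless of sign), integration against $\nu$ yields
\[
F(y) \;\geq\; d(y,y_0)^2 \,-\, 2 d(y,y_0) \int_M d(y_0,x)\,d\nu(x) \;\geq\; d(y,y_0)^2 - 2 C \, d(y,y_0),
\]
where $C := F(y_0)^{1/2}$ by Cauchy--Schwarz. In particular $F(y) \to \infty$ as $d(y,y_0) \to \infty$.

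Set $m := \inf_{y \in M} F(y)$, which is finite and non-negative by the above, and choose a minimizing sequence $(y_n)$ with $F(y_n) \to m$. Coercivity forces $(y_n)$ to remain in some closed ball $\overline{B}(y_0, R)$. Here properness of $M$ does its job: $\overline{B}(y_0, R)$ is compact, so a subsequence $y_{n_k}$ converges to some $y^\star \in M$. Continuity of $F$ then gives $F(y^\star) = \lim_k F(y_{n_k}) = m$, so $y^\star$ is a barycenter.

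The main obstacle is the coercivity step, since everything else is the textbook direct-method recipe. Note that no uniqueness is claimed, which is consistent with general metric spaces where $F$ need not be convex along geodesics; uniqueness would typically require extra structure (e.g.\ nonpositive curvature), which is not assumed here.
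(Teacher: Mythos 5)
Your argument is correct: the finiteness and local Lipschitz estimates, the coercivity bound $F(y)\ge d(y,y_0)^2-2C\,d(y,y_0)$, and the extraction of a convergent minimizing subsequence inside a compact closed ball all go through, and together they yield existence of a minimizer. Note, however, that the paper does not prove this statement at all --- it imports it verbatim as Lemma 3.2 of the cited reference (Ohta, 2012) --- so there is no in-paper proof to compare against; your direct-method argument is essentially the standard proof given in that reference, with properness supplying the compactness exactly where you use it.
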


Palm tree space is a proper metric space, since in order for every subspace to be bounded, the height of the tree must be fixed which automatically gives a compactification of the subspace as well.  Thus, (\ref{eq:frechet}) evaluated according to the tropical metric is guaranteed to exist.  However, since geodesics are not unique in palm tree space, Fr\'echet means will also, in general, not be unique.  


\subsubsection{Tropical Probability Measures}
\label{subsec:prob_measures}

Probability measures on combinatorial and phylogenetic trees have been previously discussed, for example by \cite{10.1007/978-1-4612-0719-1_1} and \cite{BILLERA2001733}.  This section is dedicated to an analogous discussion on palm tree space.  In $\mathcal{P}_N$, the Borel $\sigma$-algebra $\mathcal B(\mathcal{T}_N)$ is the $\sigma$-algebra generated by the open tropical balls $B_{\mathrm{tr}}$ of $\mathcal{T}_N$, given in Definition \ref{def:openball}.  We begin by providing the existence of probability measures on $\mathcal{P}_N$.

\begin{definition}
A {\em finite tropical Borel measure} on $\mathcal{T}_N$ is a map $\mu : \mathcal B(\mathcal{T}_N) \rightarrow [0, \infty)$ such that $\mu(\emptyset) = 0$, and for mutually disjoint Borel sets $A_1, A_2, \ldots \in \mathcal B(\mathcal{T}_N)$ implies that $\mu( \bigcup_{i=1}^\infty B_{\mathrm{tr}}^{(i)}) = \sum_{i=1}^\infty \mu( B_{\mathrm{tr}}^{(i)})$.  If in addition $\mu(\mathcal{T}_N) = 1$, then $\mu$ is a {\em tropical Borel probability measure} on $\mathcal{T}_N$.
\end{definition}

Since $\mathcal{T}_N$ is a finite union of polyhedra in $\mathbb{R}^{n}/ \mathbb{R}\one$ (see Proposition \ref{prop:polyhedra}), tropical Borel probability measures exist if finite tropical Borel measures $\mu$ exist on $\mathcal{T}_N$ by an appropriate scaling of the value of $\mu$ on each polyhedron.

Alternatively, the existence of probability measures on $\mathcal{T}_N$ can be seen by considering an arbitrary probability space $(\Omega, \mathcal{F}, \mathbb{P})$ together with a measurable map $X: \Omega \rightarrow \mathcal{T}_N$.  Such maps exist, since we have shown in Section \ref{sec:palm} that $\mathcal{T}_N$ is a Polish space and thus $( \mathcal{T}_N, \mathcal B(\mathcal{T}_N) )$ is a standard measurable space (e.g., \cite{taylor2012introduction}).  The probability space $(\Omega, \mathcal{F}, \mathbb{P})$ is a measure space by assumption, thus also measurable.  In this case, the map $X$ is a random variable taking values in $\mathcal{T}_N$.  Then, $X$ induces a probability measure $\mathbb{P}_{\mathcal{T}_N}$ on $(\mathcal{T}_N, \mathcal{B}(\mathcal{T}_N) )$ by the pushforward measure $X_*\mathbb{P}$ of $\mathbb{P}$ under $X$, known as the {\em distribution}, for all Borel sets $A \in \mathcal{B}(\mathcal{T}_N)$:
\begin{equation*}\label{eq:prob_measure}
X_*\mathbb{P}(A) := \mathbb{P}( X^{-1}(A)) = \mathbb{P}( \{ \omega \in \Omega \mid X(\omega) \in A \} ).
\end{equation*}

\subsection{Statistical and Geometric Comparisons Between Tree Spaces}

We close this section with a summary of the properties discussed thus far in a comparative manner between palm tree space and BHV space.  With respect to the statistical and probabilistic properties, we note that barycenters and probability measures exist in both palm tree space, as we have shown above, and in BHV space \citep{garba2021information}.  The differences in geometric properties of both palm tree and BHV space in Table \ref{tab:compare}.  Here, we note that the {\em depth} of a geodesic is the largest codimension among all polyhedra traversed by the geodesic; specifically, for a point $x$ belonging to the interior of a polyhedron of dimension $k$, the depth of the point $x$ is $N-2-k$.  The high depth of a geodesic path in BHV space is conjectured to play a role in the stickiness of BHV Fréchet means \citep{doi:10.1137/16M1079841}.\\

\begin{table}
\centering
\begin{tabular}{lcc}
& Palm Tree & BHV\\
\hline
Geodesic Paths 
& Not Unique & Unique\\
Complexity for Geodesic Computation 
& $O(N^2 \log N)$ & $O(N^4)$\\
Dimension of Geodesic Triangles 
& $\leq$ 2 & Unbounded\\
Depth of Random Geodesic Path 
& Low & High\\
Number of Polyhedra 
& $(2N-5)!!$ & $(2N-3)!!$\\
\hline
\end{tabular}
\caption{Summary of Geometric Comparisons Between Palm Tree Space and BHV Space}
\label{tab:compare}
\end{table}


\section{Classification and Exploratory Data Analysis in Palm Tree Space: A Numerical Experiment and Real Data Application}
\label{sec:app}

We now give concrete demonstrations of both inferential and descriptive statistical tasks in palm tree space to showcase its viability.  Specifically, to demonstrate the viability of statistical inference in palm tree space, we propose and develop a tropical version of linear discriminant analysis and demonstrate the method on simulated data.  To demonstrate the viability of descriptive statistics in palm tree space, we study a real data application of the seasonal influenza virus.  Here, we moreover provide a performance comparison to the BHV case with these real, large-scale data.

In this section, we focus on the setting of rooted phylogenetic trees.  

\subsection{Descriptive Statistics: Dimension Reduction in Tree Spaces}

We now provide an example of a descriptive study in palm tree space by applying the tropical PCA method of \cite{Yoshida2018} to real data.  We compare the performance of tropical versus BHV principal component analysis (PCA) of the seasonal influenza virus by studying its diversity over twenty years of collected longitudinal data.


\subsubsection{Influenza Data}
\label{sec:data}

We study the influenza type A virus, which is an RNA virus that is classified by subtype according to the two proteins occurring on the surface of the virus: hemagglutinin (HA) and neuraminidase (NA).  Here, we focus on HA, which tends to be the most variable protein in genomic evolution, in terms of changing the antigenic make-up of surface proteins.  Such antigenic variability (known as {\em antigenic drift}) is an important driving factor behind vaccine failure.  We restrict our study to the subtype H3N2, which is becoming increasingly abundant, and a dominant factor studied in developing flu vaccines due to its recently increasing resistance to standard antiviral drugs \citep{NYT}.  It was also the cause of an epidemic due to vaccine failure in 2002-2003 \citep{centers2004preliminary}.

Genomic data for 1089 full length sequences of hemagglutinin (HA) for influenza A H3N2 from 1993 to 2017 in the state of New York were obtained from the GI-SAID EpiFlu\textsuperscript{TM} database (\url{www.gisaid.org}) and aligned with {\sc muscle} \citep{10.1093/nar/gkh340} using default settings.  HA sequences from each season were related to those of the preceding season.  We then applied {\em tree dimensionality reduction} \cite{zairis2016genomic} using temporal windows of 5 consecutive seasons to create 21 datasets: The influenza virus is assumed to emerge and evolve from a common ancestor: although the virus mutates each season and within each patient, each of these seasonal and patient-specific mutations can be traced back to a single virus \cite[e.g.,][]{10.1371/journal.pone.0005022}.  This evolutionary pattern is depicted in a large phylogenetic tree.  Tree dimensionality reduction produces a collection of smaller trees that faithfully represents the evolutionary behavior of the single large tree, and thus allows the evolutionary information to be treated as a dataset with multiple points akin to bootstrapping, rather than viewing the large tree as a single datum \citep{zairis2016genomic}.  Among the 21 datasets, the date of each dataset corresponds to the first season; for example, the dataset dated 2013 consists of 5-leaved trees where the leaves come from seasons 2013 through 2017.  Each tree in these datasets was constructed using the neighbor-joining method \citep{doi:10.1093/oxfordjournals.molbev.a040454} with Hamming distance.  Outliers were then removed from each season using {\sc kdetrees} \citep{10.1093/bioinformatics/btu258}.  On average, there were approximately 20,000 remaining trees in each dataset.  Finally, PCA was performed under the tropical metric \citep{Yoshida2018} and under the BHV metric \citep{doi:10.1093/biomet/asx047}.




\subsubsection{Principal Component Analysis in Tree Spaces}
\label{sec:treePCA}

PCA is a fundamental technique in descriptive and exploratory statistics that visualizes relationships within the data and reduces dimension.  As such, PCA has many important implications---for example, it projects to the subspace of the solution of $k$-means clustering \cite{ding2004k}---and may be interpreted in several different ways.  One interpretation may be seen as searching for a lower-dimensional plane that minimizes the sum of squared distances from the data points to the plane, and then finding an orthogonal projection from the data points onto the plane to visualize them.  This is the approach adapted by \cite{doi:10.1093/biomet/asx047,Yoshida2018}, which we implement here.


Respective adaptations of the above-mentioned interpretation of PCA to palm tree space and BHV space are given by \cite{doi:10.1093/biomet/asx047} and \cite{Yoshida2018}: convex, triangular regions---the tropical triangle and the {\em locus} of weighted Fr\'{e}chet means computed with respect to the BHV metric, respectively---define notions of second principal components in the respective spaces.  We refer the reader to these references for details and here, we apply these methods to the influenza virus data to explore and compare the performance of both methods in large-scale real data.

\subsubsection{Interpretation of Tree PCA}

In the tropical case, the second principal component represented by the tropical triangle---whose vertices are given by three ultrametric trees, and whose edges are given by the tropical line segments between them---divides into cells, which are determined by the tree topologies that an edge (tropical line segment) traverses.  Trees in the dataset are then projected into the cells corresponding to their topologies in the PCA visualization.  The simplest case in both BHV and tropical PCA is where all three vertices of the triangle are of the same tree topology, then there will only be one cell and all projections will be of the same topology; if two points are trees of the same tree topology, then every point on the tropical line segment connecting them will also be of the same tree topology.  An example can be seen in the 1993 data set, available at \url{https://github.com/antheamonod/FluPCA/Figures}.

A more interesting example can be seen in the 2008 dataset in Figure \ref{fig:trop}.  The tropical triangle divides into six cells; 
the tropical line segment forming the topmost edge of the triangle traverses five different tree topologies (indicated by the pink, green, black, blue, and red points along the topmost line segment), indicating that the three vertices of the triangle are quite different from one another in terms of tree topology.

\begin{figure}
    \centering
        \includegraphics[width=0.85\textwidth]{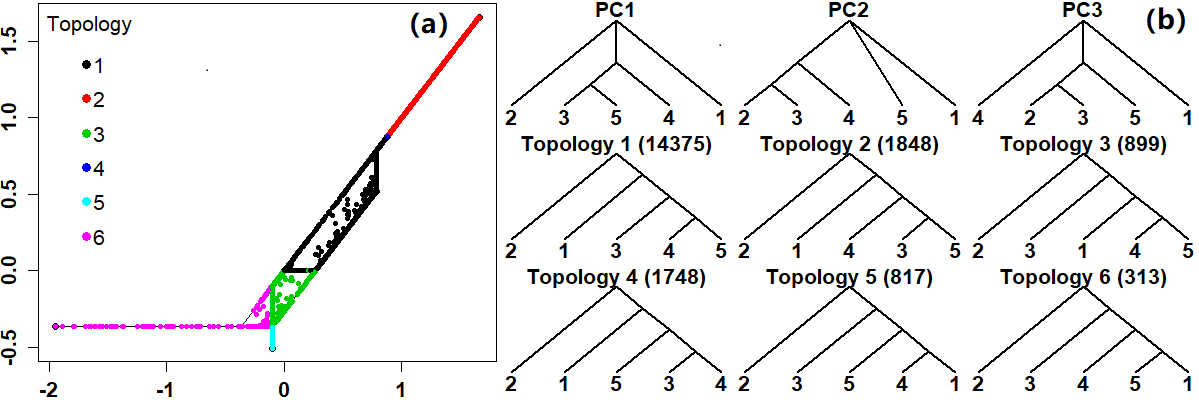}
    \caption{2008: The tropical triangle as the second tropical principal component. (a) Tropical triangle and projected data points; (b) Vertices of the tropical triangle and projected tree topologies, where the numbers appearing in parentheses are the frequencies for each tree topology.}
    \label{fig:trop}
\end{figure}

The BHV locus, which represents the second principal component in BHV space, is also generated by three vertices (ultrametric trees), and depicted in the BHV PCA plots by a triangle.  Here, varying tree topologies are depicted by the multicolored patches within the triangular region, indicating that the locus straddles several orthants in BHV space.  In the 2008 dataset in Figure \ref{fig:bhv}, the locus straddles two BHV orthants and we see two tree topologies occurring among the projected points.  

\begin{figure}
    \centering
        \includegraphics[width=0.85\textwidth]{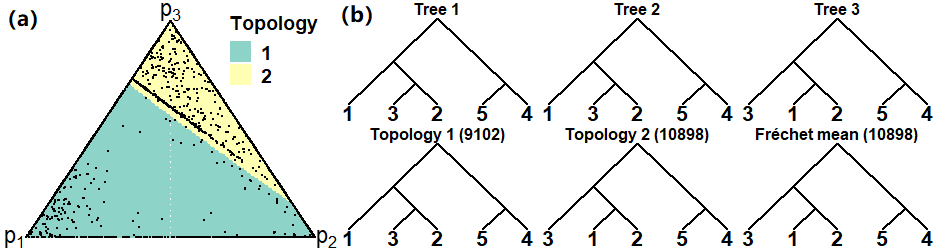}
    \caption{2008: The locus of BHV Fr\'{e}chet means as the second principal component. (a) The simplex shaded by topology of corresponding points on the affine subspace; (b) Trees 1, 2, and 3 correspond to three weighted Fr\'echet means, where the numbers appearing in parentheses are the frequencies for each tree topology.} 
    \label{fig:bhv}
\end{figure}

In the 2008 dataset, palm tree space and the tropical geometric approach to computations in phylogenetic tree space appears to allow for occurrences of richer and more subtle structures and methods: in these examples, we see tropical PCA projections of six different topologies, versus two in the BHV case.

\subsubsection{Results: Proportion of Standard Deviation Explained $R$}

In terms of standard deviation explained given in Table \ref{tab:var}, we see that in general, tropical PCA is able to explain more of the standard deviation in the data than does BHV PCA.  To compute the fraction of standard deviation explained, we used the approach of \cite{10.1093/bioinformatics/btaa564} as presented in Section 4.1.  BHV PCA results also have a higher variability over a wider range than tropical PCA: BHV PCA explains between 3.7\% and 97\% of the standard deviation, while tropical PCA explains between 46\% and 96\%.

\begin{table}
\centering
\caption{Proportion of Explained Standard Deviation ($R$) for Tropical and BHV PCA}
\label{tab:var}
\scalebox{0.85}{%
\begin{tabular}{cccccccc}
 & 1993 & 1994 & 1995 & 1996 & 1997 & 1998 & 1999 \\
\hline
Tropical & 0.7269 & 0.8505 & \textbf{0.9577} & 0.7482 & 0.8437 & 0.8790 & 0.8564 \\
BHV & 0.5496 & 0.6593 & 0.5613 & 0.7089 & 0.2247 & 0.8004 & \textbf{0.9760} \\
\hline
\end{tabular}}
\scalebox{0.85}{%
\begin{tabular}{cccccccc}
 & 2000 & 2001 & 2002 & 2003 & 2004 & 2005 & 2006 \\
\hline
Tropical & 0.7942 & 0.8302 & 0.9525 & 0.8622 & 0.7931 & 0.8304 & 0.7300 \\
BHV & \textbf{0.0374} & 0.9741 & 0.9467 & 0.7019 & 0.6042 & 0.6028 & 0.4882 \\
\hline
\end{tabular}}
\scalebox{0.85}{%
\begin{tabular}{cccccccc}
 & 2007 & 2008 & 2009 & 2010 & 2011 & 2012 & 2013 \\
\hline
Tropical & 0.6995 & \textbf{0.4637} & 0.6289 & 0.6665 & 0.5920 & 0.5568 & 0.5624 \\
BHV & 0.5222 & 0.2144 & 0.3953 & 0.4399 & 0.5264 & 0.4470 & 0.3576 \\
\hline
\end{tabular}}
\end{table}

\subsection{Inferential Statistics: Tropical Linear Discriminant Analysis}

We now present an example of an inferential task in statistics in palm tree space.  Classification on gene trees and phylogenetic trees is an important task in phylogenomics, such as comparative gene analysis and analysis on convergence on Bayesian inference on phylogenetic tree reconstruction \citep{Haws2021}.  Here, we propose tropical linear discriminant analysis (LDA), akin to classical LDA as a generalization of Fisher's linear discriminant \citep{Fisher}, to perform linear classification on gene trees.  LDA is a method in classical statistics which finds a linear combination of features to distinguish between two or more classes of objects or events, resulting in a linear classifier.  It is a supervised learning model that classifies categorical response variables from given explanatory variables and is constructed from a similar perspective as PCA described above in Section \ref{sec:treePCA}, which is to search for an explanatory linear subspace for variables.

\begin{definition}[Tropical Convex Hull]\label{def:polytope}
Take a finite subset $V = \{v^1, \ldots , v^m\}\subset \mathbb{R}^n/\mathbb{R}\one$.  The {\em tropical convex hull} or {\em tropical polytope} of $V$ is the smallest tropically-convex subset containing $V$, in the sense that the tropical line segment between any two points in $V$ is contained in $V$.   It can be written as the set of all tropical linear combinations of $V$ such that:
$$ \mathrm{tconv}(V) = \{a_1 \odot v^1 \oplus a_2 \odot v^2 \oplus \cdots \oplus a_m \odot v^m \mid  a_1,\ldots,a_m \in \mathbb{R} \}.$$
\end{definition}

Notice that a tropical polytope of a set of two points $\{v^1, v^2\} \subset \mathbb{R}^n/\mathbb{R}\one$ is the tropical line segment between $v^1, v^2$.

Let $\Gamma_{u, v}$ be the tropical line segment between $u, v \in \mathcal{U}_N$ and consider a tropical polytope
$$
\mathscr{P} = {\rm tconv}(w_1,w_2,\ldots,w_m) \subset \mathbb{R}^n/\mathbb{R}\one,
$$ 
where $w_1, \ldots , w_m$ are vertices of the tropical polytope $\mathscr{P}$.  Then, by formula 5.2.3 in \cite{maclagan2015introduction}, the projection map $\mathrm{proj}_{\mathscr{P}}$ sending any point $x \in \mathbb{R}^n/\mathbb{R}\one$ to a closest point in the tropical polytope $\mathscr{P} \subset \mathbb{R}^n/\mathbb{R}\one$ such that
$$ \mathrm{proj}_\mathscr{P} (x) \,= \,
\lambda_1 \odot  w^1 \,\oplus \,
\lambda_2 \odot  w^2 \,\oplus \, \cdots \,\oplus \,
\lambda_m \odot  w^m  ,
\quad {\text{where}} \quad \lambda_k = {\rm min}(x-w_k).
$$

Here, suppose we have a sample $\{(x_1, y_1),  \ldots , (x_m, y_m)\}$ where $x_1, \ldots , x_m \in \mathbb{R}^n/\mathbb{R}\one$ and $y_1, \ldots , y_m \in \{1, 2\}$.  Let $\mathcal{S}:= \{x_1, \ldots , x_m\} \subset \mathbb{R}^n/\mathbb{R}\one$.

\begin{definition}
A {\em tropical Fermat--Weber point} $F_{\mathcal{S}}$ of $\mathcal{S}$ is defined as
\[
FW_{\mathcal{S}} := \arg \min_{x \in \R^n/\mathbb{R}\one}\sum_{i=1}^m \dtr(x, x_i).
\]
Further, suppose we have $u, v \in \mathcal{U}_N$ and the tropical line segment between them $\Gamma_{u, v}$.  Then a {\em tropical Fermat--Weber point} $FW_{(\mathcal{S}, \Gamma_{u, v})}$ of $\mathcal{S}$ over $\Gamma_{u, v}$ is defined as
\[
FW_{(\mathcal{S}, \Gamma_{u, v})} := \arg \min_{x \in \Gamma_{u, v}}\sum_{i=1}^m \dtr(x, x'_i),
\]
where $x'_i$ is the projection of $x_i \in \mathcal{S}$ onto the tropical line segment $\Gamma_{u, v}$. 
\end{definition}

\begin{figure}
    \centering
\begin{tikzpicture}[scale=0.75]
\draw (0,0) -- (9,0);
\filldraw [black] (5,3) circle (3pt);
\node [above] at (5,3) {$FW'$}; 
\filldraw [black] (5,0) circle (3pt);
\node [below] at (5,0) {$FW$};
\filldraw [black] (0,0) circle (3pt);
\node [above] at (0,0) {$x_2$}; 
\filldraw [black] (9,0) circle (3pt);
\node [above] at (9,0) {$x_m$};
\filldraw [black] (7.5,0) circle (3pt);
\node [above] at (7.5,0) {$x_{m-1}$};
\filldraw [black] (5.5,0) circle (3pt);
\node [above] at (5.5,0) {$x_{k+1}$}; 
\node [above] at (6.5,0) {$\cdots$};
\filldraw [black] (4,0) circle (3pt);
\node [above] at (4,0) {$x_k$};
\filldraw [black] (1.5,0) circle (3pt);
\node [above] at (1.5,0) {$x_1$};
\filldraw [black] (2.25,0) circle (3pt);
\node [above] at (2.25,0) {$x_3$};
\node [above] at (3.125,0) {$\cdots$};
\end{tikzpicture}
    \caption{Sample points on the tropical line segment and their tropical Fermat--Weber points}
    \label{fig:lm1}
\end{figure} 

\begin{lemma}\label{lm1}
Suppose $u, v \in \mathbb{R}^n/\mathbb{R}\one$ and suppose $\mathcal{S}=\{x_1, \ldots , x_m\} \subset \Gamma_{u, v}$, where $\Gamma_{u, v}$ is the tropical line segment between $u, v \in \mathbb{R}^n/\mathbb{R}\one$.  Then a Fermat--Weber point $FW_{\mathcal{S}}$ of $\mathcal{S}$ is in $\Gamma_{u, v}$.
\end{lemma}
\begin{proof}
Let $FW \in \Gamma_{u, v}$ be the points such that
$
   \sum_{i=1}^{m} \dtr(FW, x_i) = \min_{x \in \Gamma_{u, v}} \sum_{i=1}^{m} \dtr(x, x_i)  
$
and suppose there exists $FW' \in \mathbb{R}^n/\mathbb{R}\one - \Gamma_{u, v}$ such that
\begin{equation}\label{eq:2}
    \sum_{i=1}^{m} \dtr(F', x_i)  < \sum_{i=1}^m \dtr(F, x_i).
\end{equation}
As seen in Figure \ref{fig:lm1}, by the fact that $\dtr$ is a metric over $\mathbb{R}^n/\mathbb{R}\one$, notice that $\dtr(FW, x_i) \leq \dtr(FW', x_{i}) +  \dtr(FW, FW')$ for all $i = 1, \ldots , m$.  Then we have
\[
\sum_{i=1}^{m} \dtr(FW, x_i) \leq \sum_{i=1}^{m} \Big(\dtr(FW', x_i) +  \dtr(FW, FW')\Big).
\]
Since $\dtr(FW, FW') \geq 0$, we have contradiction to \eqref{eq:2}.
\end{proof}

In our proposed tropical LDA, we follow suit of a classical linear discriminant analysis over a Euclidean space; here we assume we have only two labels (classes) in the response variable.  A classical LDA for predicting the binary response variable based on given explanatory variables finds a linear function which maximizes the distance between one centroid of projected points onto the linear function from observations in one group and another centroid of projected points from the other group.  Using Lemma \ref{lm1}, we adapt the classical LDA procedure to the tropical setting by learning a tropical line segment between two points over $\mathcal{U}_N$, $\Gamma_{u, v}$, which is a geodesic by Proposition \ref{prop:geodesic}.  In particular, we maximize the tropical distance between $FW_{(\mathcal{S}_1, \Gamma_{u, v})}$ and $F_{(\mathcal{S}_2, \Gamma_{u, v})}$, where $\mathcal{S}_1$ is a sample with label in the response variable $y_i = 1$ and  $\mathcal{S}_2$ is a sample with label in the response variable $y_i = 2$. More specifically, the problem is to find $u, v \in \mathcal{U}_n$ such that
\[
\max_{u, v, \in \mathcal{U}_n} \dtr(FW_{(\mathcal{S}_1, \Gamma_{u, v})}, FW_{(\mathcal{S}_2, \Gamma_{u, v})}).
\]

The algorithms for training and predicting a tropical LDA are given in Algoithms \ref{alg:LDA} and \ref{alg:LDA2}.

\begin{algorithm}
\caption{Training a Tropical Linear Discriminant Analysis (LDA)}\label{alg:LDA}
\begin{algorithmic}
\State {\bf Input}: $D_1 \subset \mathcal{U}_N$ is a set of ultrametrics with label $1$ and $D_2 \subset \mathcal{U}_N$ is a set of ultrametrics with label $2$.  Number of iterations $I \geq 1$.
\State {\bf Output}: End points $u, v \in \mathcal{U}_N$ for the tropical line segment for linear discriminant analysis.  
\State Pick a random point $x_1$ in $D_1$.
\State Pick a random point $x_2$ in $D_2$.
\State Compute a tropical Fermat--Weber point $FW_1\in \Gamma_{x_1, x_2}$ of projections of all points in $D_1$ over $\Gamma_{x_1, x_2}$ and compute a tropical Fermat--Weber point $FW_2 \in \Gamma_{x_1, x_2}$ of projections of all points in $D_2$  over $\Gamma_{x_1, x_2}$.
\State Let $y^*$ be the tropical distance between $FW_1$ and $FW_2$.
\State Set $u = x_1$ and $v = x_2$.
\For{$i = 1, \ldots , I$}
\State Set $y_0 = y^*$.
\State Set $u_0 = u$ and $v_0 = v$.
\State Pick random points $\hat{u}, \hat{v} \in \mathcal{U}_n$.
\State Compute a tropical Fermat--Weber point $FW_1 \in \Gamma_{\hat{u}, \hat{v}}$ of projections of all points in $D_1$ over $\Gamma_{\hat{u}, \hat{v}}$ and compute a tropical Fermat--Weber point $FW_2 \in \Gamma_{\hat{u}, \hat{v}}$ of projections of all points in $D_2$  over $\Gamma_{\hat{u}, \hat{v}}$.
\State Compute the tropical distance $y'$ between $FW_1$ and $FW_2$.
\If{$y' > y_0$}
    \State $y^* = y'$
    \State $u = \hat{u}$ and $v = \hat{v}$
\ElsIf{$y' \leq y_0$}
    \State $y^* = y_0$
\EndIf
\EndFor
\State {\bf Return}: $u, v \in \mathcal{U}_N$.
\end{algorithmic}
\end{algorithm}

\begin{algorithm}
\caption{Predicting a Tropical Linear Discriminant Analysis (LDA)}\label{alg:LDA2}
\begin{algorithmic}
\State {\bf Input}: Output $u, v \in \mathcal{U}_N$ from Algorithm \ref{alg:LDA}.  Test set $D_1=\{x_1^1, \ldots , x^1_{m_1}\} \subset \mathcal{U}_N$ with label 1 and Test set $D_2 =\{x_1^2, \ldots , x^2_{m_2}\} \subset \mathcal{U}_N$ with label 2. 
\State {\bf Output}:  Labels $L_1=\{L_1^1, \ldots , L_{m_1}^1\}$ and $L_2=\{L_1^1, \ldots , L_{m_1}^1\}$ for all points in $D_1$ and $D_2$, respectively.
\State $y = \frac{\dtr(u, v)}{2}$.
\For{each point $x_1^1, \ldots , x^1_{m_1} \in D_1$}
\State Compute $y_1 = \dtr(u, x_i^1)$ and $y_2 = \dtr(v, x_1^1)$.
\If{$y_1 < y$}
    \State Label $L_i^1 = 1$ for $x_i^1$.
\ElsIf{$y_2 \leq y$}
    \State Label $L_i^1 = 2$ for $x_i^1$.
\EndIf
\EndFor
\For{each point $x_2^1, \ldots , x^2_{m_1} \in D_2$}
\State Compute $y_1 = \dtr(u, x_i^2)$ and $y_2 = \dtr(v, x_1^2)$.
\If{$y_1 < y$}
    \State Label $L_i^2 = 1$ for $x_i^2$.
\ElsIf{$y_2 \leq y$}
    \State Label $L_i^2 = 2$ for $x_i^2$.
\EndIf
\EndFor
\State {\bf Return}: $L_1, \, L_2$.
\end{algorithmic}
\end{algorithm}

\subsubsection{Numerical Experiments: Mixture of Coalescent Models}

We simulated data following \cite{10.1093/bioinformatics/btaa564}; we generated gene trees with a species tree under a coalescent model via the software {\tt Mesquite} \citep{mesquite}.  We fixed
the effective population size $m = 100,000$ and varied $r := \frac{sd}{m}$, where $sd$ is the species depth.  We generated simulated data sets as described in Algorithm 5.1 of \cite{10.1093/bioinformatics/btaa564}.

The number of leaves was set to $N$ = 10. We used the ratio between species depth $sd$ and effective population size $r$ to vary $r = \{0.25, 0.5, 1, 2, 5, 10\}$.  We took an 80-20 split for training and test sets; the sample size of a training set was set to 120 and the sample size for a test set was set to 30.  The classification accuracy rates are shown in Table \ref{tab:res}.  In our simulations, we approximated the tropical LDA by sampling random points in $\mathcal{U}_N$ and approximated the projection of a tropical Fermat--Weber point by sampling random points for increased computational efficiency.  The number of iterations was set to $500$.  Larger $r$ results in more accurate classification results \citep{10.1093/bioinformatics/btaa564}.

\begin{table}
    \centering
    \begin{tabular}{c|cccccc}\hline
         $r$ & 0.25 & 0.5 & 1 & 2 & 5 & 10 \\\hline
         Accuracy Rate& 0.533& 0.717& 0.733& 0.967 &1.000 &1.000\\\hline
    \end{tabular}
    \caption{Classification accuracy rates for tropical LDA for simulated data sets generated under the coalescent model with two different species trees.}
    \label{tab:res}
\end{table}

\subsection*{Software and Data Availability}

Software to compute tropical LDA and both tropical and BHV PCA is publicly available in R and Java code.  The implementation of tree PCA to the pre-processed influenza data described in this paper is located on the FluPCA GitHub repository at \url{https://github.com/antheamonod/FluPCA}.  The resulting figures from both BHV and tropical PCA projections for all 21 data sets are also available on the FluPCA GitHub repository.


\section{Discussion}
\label{sec:end}

In this paper, we defined palm tree space as the space of phylogenetic trees with $N$ leaves, endowed with the tropical metric.  We gave results on its analytic, topological, geometric, and combinatorial properties and showed that they are conducive to both descriptive and inferential statistics, as well as unsupervised and supervised machine learning.  We have also shown that it is a setting amenable to rigorous treatments and studies in probability.  We performed both descriptive and inferential statistical analysis on real and simulated data and showed that the tropical setting is viable for such studies, and moreover, in our real data application, we see that the tropical approach outperforms the BHV approach.  

\subsection*{Biological and Statistical Implications}

Despite the statistical challenges of arbitrariness of dimension of BHV polytopes and stickiness which affect descriptive and inferential statistics, the BHV parameterization has been successfully implemented to reveal important biological findings (e.g., \cite{zairis2014moduli}).  In terms of interpretation, the unresolved singularities of BHV Fr\'{e}chet means translate to ``indecisiveness'' of which branching patterns or tree topologies are ``preferred,'' which is consistent with what is often seen in some biological settings where the trees arise from sequence alignment.  However, mathematically, trees are used to model other biological phenomena, such as pulmonary paths as airway trees \cite[e.g.,][]{6389677, 6985680}; brain growth and structure \cite[e.g.,][]{pmid24078809}; and neuronal morphologies \cite[e.g.,][]{Kanari2018}.  Such a probabilistic assumption may not be reasonable in these other settings.  Given recent research interest in developing methods to bypass these difficulties support the goal of our work, which is that exploring alternative representations is an important research direction \cite[e.g.,][]{anaya2020properties, doi:10.1137/15M1050914}.  An important and interesting direction for future research is the identification of non-uniform probability distributions in the tropical setting, which is challenging yet promising; various ways in which tropical Gaussian distributions may be constructed have been previously outlined \citep{tran2018tropical}.

In the context of shape statistics \citep{10.2307/2245331} and computational anatomy \citep{grenander1998computational}, the data objects of interest are often modeled as elements of algebraic spaces.  In particular, these algebraic spaces are quotient spaces generated by group actions.  Recent work has studied the behavior of estimators on such spaces, uncovering undesirable properties, such as biasedness and inconsistency when the group actions are random (that is, when the quotient spaces are generated by elements of the group are chosen at random to act on the topological space) or continuous (as in the case of Lie groups acting on Riemannian manifolds) \citep{doi:10.1137/16M1083931, 10.1007/978-3-319-25040-3_15}.  Nonparametric methods have been developed to bypass the problem of inconsistency \citep{BHATTACHARYA20141}.  As previously mentioned, the tropical projective torus $\mathbb{R}^n/\mathbb{R}\one$ is a quotient space that may be generated by a group action, however the biasedness and inconsistency in previous work arise due to the poor behavior of the transformed metric after it is mapped into the quotient space, which results in a pseudometric.  In our case, the tropical metric is well-behaved and defined directly on the quotient space, therefore differing in setting to previous work.

It should also be noted that the non-uniqueness property of geodesics in palm tree space poses computational difficulties, but does not prohibit statistical analysis and can still yield useful descriptive as well as inferential information, for example, on clustering behavior.  Another important setting where geodesics are not unique is that of positively-curved spaces.  The asymptotic behavior and distributions on Riemannian manifolds, including positively curved manifolds, has been studied in existing work \citep{10.2307/20535502}.  Recent work develops techniques for data analysis on curved spaces by tuning the geodesic metrics accordingly \citep{Kobayashi2019}.  In particular, a general Fr\'{e}chet function is defined, and its parameters are chosen accordingly, depending on the goal: for example, one geodesic metric may be transformed into another to control the curvature of the space for data analysis.  There are large bodies of existing work in related areas on curved spaces, for example, in the case of manifold learning; shape statistics; Wasserstein spaces for probability measures; and information geometry.  Though these domains each have their own specific goals and studies, data analysis and computation play a central role in these settings.  Moreover, there are known settings in these related areas where positive curvature, and thus non-uniqueness of geodesics, arises (for example, the 2-Wasserstein space for Gaussian measures is positively curved \citep{takatsu2011}).  Adapting existing techniques in these settings to statistical analysis in palm tree space is an important direction of research that merits exploration.

\subsection*{Future Work}

Our work invites the reinterpretation of existing statistical methods in terms of the tropical metric to make a wider array of exact analyses readily available and interpretable to phylogenetic research with potential impact for biological discoveries.  Two important directions for future work include the search for explicit parametric probability distributions in order to reinterpret classical probability-based statistical approaches on sets of phylogenetic trees, based on, for example, the classical central limit theorem.  Another is the formal definition and computation of tropical Fréchet means and a study of the extent of its stickiness.  Finally, a complete and systematic comparative study between palm tree and BHV space would be an interesting study to conduct.


\section*{Acknowledgments}

The authors are grateful to Robert J.~Adler, Omer Bobrowski, Yueqi Cao, Ioan Filip, Maia Fraser, Stephan Huckemann, Elliot Paquette, Juan \'{A}ngel Pati\~{n}o-Galindo, Yitzchak (Elchanan) Solomon, and Bernd Sturmfels for helpful discussions.  We are especially indebted to Carlos Am\'endola and Hossein Khiabanian for their extensive commentary, suggestions, and support throughout the course of this project.  

A.M.~and R.Y.~wish to acknowledge the Mathematisches Forschunginstitut Oberwolfach (MFO) for hosting their visit in January 2018, which inspired this work.  B.L.~wishes to acknowledge support by the Simons Foundation and the hospitality of the Institut Mittag-Leffler during the spring of 2018, where a large part of this research was carried out.

The authors would also like to acknowledge the GI-SAID EpiFlu\textsuperscript{TM} initiative for making the data used in this study publicly available.

R.Y.~is supported in part by NSF DMS \#1622369 and \#1916037.  Any opinions, findings, and conclusions or recommendations expressed in this material are those of the author(s) and do not necessarily reflect the views of any of the funders.

\subsection*{Author Contributions Statement}

A.M.~conceived the study and designed the research.  A.M., B.L., and R.Y.~performed research.  A.M.~and R.Y.~developed the algorithms; Q.K.~implemented the software and performed the analyses.  A.M.~wrote the manuscript.

\subsection*{Competing Financial Interests Statement}

The authors declare that no competing financial interests exist.








\clearpage
\newpage
\bibliographystyle{chicago}  
\bibliography{PalmTree_Ref} 


\end{document}